\theoremstyle{plain}
 \newtheorem{thm}{Theorem}[section]
 \newtheorem{lem}[thm]{Lemma}
 \newtheorem{prop}[thm]{Proposition}
 \newtheorem{cor}[thm]{Corollary}
\theoremstyle{definition}
\theoremstyle{remark}
 \newtheorem{rem}[thm]{Remark}
\newcommand{\fig}[3][width=12cm]{
\begin{figure}[htbp]
 \centering 
 \includegraphics[#1,clip]{#2} 
 \caption{#3} 
\label{fig:#2}
\end{figure}}
\newcommand{\Z}{\mathbb{Z}}
\begin{document}

\title[Degeneration of hyperbolic cone structures]{Degeneration of 3-dimensional hyperbolic cone structures with decreasing cone angles}
\author{Ken'ichi YOSHIDA}
\address{Center for Soft Matter Physics, Ochanomizu University, 2-1-1 Ohtsuka, Bunkyo-ku, Tokyo 112-8610, Japan}
\email{yoshida.kenichi@ocha.ac.jp}
\subjclass[2020]{57M50, 52B10}
\keywords{hyperbolic cone structure, link in the thickened torus, 
dihedral angles of a polyhedron}
\date{}

\begin{abstract}
For 3-dimensional hyperbolic cone structures with cone angles $\theta$, 
local rigidity is known for $0 \leq \theta \leq 2\pi$, 
but global rigidity is known only for $0 \leq \theta \leq \pi$. 
The proof of the global rigidity by Kojima is based on the fact 
that hyperbolic cone structures with cone angles at most $\pi$ 
do not degenerate in deformations decreasing cone angles to zero. 

In this paper, 
we give an example of a degeneration of hyperbolic cone structures 
with decreasing cone angles less than $2\pi$. 
These cone structures are constructed on a certain alternating link in the thickened torus 
by gluing four copies of a certain polyhedron. 
For this construction, 
we explicitly describe the isometry types on such a hyperbolic polyhedron. 
\end{abstract}

\maketitle

\section{Introduction}
\label{section:intro}

A 3-dimensional hyperbolic cone-manifold is a hyperbolic 3-manifold 
with cone-type singularities. 
In this paper, 
we assume that a cone-manifold has finite volume, 
and cone singularities consist of disjoint closed geodesics.

Let $X$ be a 3-manifold, 
and let $\Sigma$ be a link in $X$. 
Let $\Sigma_{1}, \dots , \Sigma_{n}$ denote the components of $\Sigma$. 
Suppose that $X \setminus \Sigma$ admits an incomplete hyperbolic structure, 
and the completed metric has the form 
\[
dr^{2} + \sinh^{2} r d \theta^{2} + \cosh^{2} r dz^{2}
\]
in cylindrical coordinates around each component $\Sigma_{i}$ for $1 \leq i \leq n$, 
where $r$ is the distance from the singular locus, 
$z$ is the distance along the singular locus, 
and $\theta$ is the angle measured modulo $\theta_{i} > 0$. 
Then the metric on $(X, \Sigma)$ is called a \textit{hyperbolic cone structure}. 
More precisely, an equivalence class of such cone metrics by isometries isotopic to the identity 
is a cone structure. 
The angle $\theta_{i}$ is called the \textit{cone angle} at the cone locus $\Sigma_{i}$. 
If $\theta_{i} = 2\pi$, the cone locus $\Sigma_{i}$ can be regarded as non-singular. 
Furthermore, a cusp of a hyperbolic 3-manifold can be regarded as 
a cone locus with cone angle zero. 
This is justified by the fact that 
hyperbolic cone structures 
converge to a cusped hyperbolic structure 
in the pointed Gromov--Hausdorff topology 
if the cone angles converge to zero.

From now on, fix a pair $(X, \Sigma)$. 
Let $\mathcal{C}$ denote the set of cone structures on $(X, \Sigma)$ 
such that the cone angles are at most $2\pi$. 
Suppose that there is a finite volume cone structure $g \in \mathcal{C}$. 
Then any $g \in \mathcal{C}$ has finite volume. 
The set $\mathcal{C}$ admits the pointed Gromov--Hausdorff topology, 
which is induced by the geometric convergence of metric spaces. 
The continuous map $\Theta \colon \mathcal{C} \to [0,2\pi]^{n}$ 
is defined by $\Theta (g) = (\theta_{1}, \dots , \theta_{n})$, 
where $\theta_{i}$ is the cone angle at $\Sigma_{i}$ in the cone-manifold $(X, \Sigma; g)$. 
Let $g_{0}$ be an element in $\mathcal{C}$ such that 
$\Theta (g_{0}) = (0, \dots , 0)$. 
The cusped hyperbolic structure $g_{0}$ is unique for $(X, \Sigma)$ 
by the Mostow--Prasad rigidity~\cite{mostow1973strong, prasad1973strong}.

Local and global rigidity for hyperbolic cone structures 
are known as follows. 

\begin{thm}[The local rigidity by Hodgson and Kerckhoff~\cite{hodgson1998rigidity}]
\label{thm:local}
The space $\mathcal{C}$ is Hausdorff, 
and the map $\Theta \colon \mathcal{C} \to [0,2\pi]^{n}$ is a local homeomorphism. 
In other words, the space $\mathcal{C}$ is locally parametrized by the cone angles. 
\end{thm}

The local rigidity does not hold in general if cone angles exceed $2\pi$. 
Izmestiev~\cite{izmestiev2011examples} 
constructed infinitesimally flexible hyperbolic cone-manifolds 
with cone angles more than $2\pi$ 
by gluing polyhedra.

\begin{thm}[The global rigidity by Kojima~\cite{kojima1998deformations}]
\label{thm:global}
Let 
\[
\mathcal{C}_{[0,\pi]} = \{ g \in \mathcal{C} \mid \Theta (g) \in [0,\pi]^{n} \}. 
\] 
Then the map $\Theta |_{\mathcal{C}_{[0,\pi]}} \colon \mathcal{C}_{[0,\pi]} \to [0,\pi]^{n}$ 
is injective. 
In other words, the cone structure is determined by the cone angles 
if the cone angles do not exceed $\pi$. 
\end{thm}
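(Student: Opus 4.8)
The plan is to reduce the global injectivity statement to a path-lifting argument anchored at the unique cusped structure $g_{0}$, with the analytic heart being a no-degeneration (precompactness) statement that holds precisely because the cone angles stay in $[0,\pi]$. Fix $\theta \in [0,\pi]^{n}$ and suppose $g, g' \in \mathcal{C}_{[0,\pi]}$ satisfy $\Theta(g) = \Theta(g') = \theta$; the goal is $g = g'$. Consider the straight path $\gamma(t) = (1-t)\theta$ in $[0,\pi]^{n}$ for $t \in [0,1]$, running from $\theta$ to the origin. By the local rigidity (Theorem~\ref{thm:local}), $\Theta$ is a local homeomorphism, so $\gamma$ lifts uniquely and continuously starting from $g$ (and independently from $g'$) on a maximal half-open subinterval of $[0,1]$; the entire difficulty is whether these lifts extend all the way to $t=1$.

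The main step, and the main obstacle, is to show that the lifted family does not escape $\mathcal{C}$, i.e. that hyperbolic cone structures do not degenerate as the cone angles decrease while remaining at most $\pi$. I would establish precompactness of the lifted family in the pointed Gromov--Hausdorff topology. Volume is controlled by the Schl\"afli formula $dV = -\tfrac{1}{2}\sum_{i} \ell_{i}\, d\theta_{i}$, where $\ell_{i}$ is the length of the $i$-th cone geodesic: decreasing the angles makes $V$ monotone along $\gamma$ and keeps it bounded above by $\vol(g_{0})$, so the volume stays in a compact range. Combined with the thick-thin decomposition and the Margulis lemma, the only obstructions to subconvergence to a genuine cone structure are the collapse of an embedded tube about some component of $\Sigma$ or the collision of two distinct cone loci. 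This is exactly where $\theta_{i} \leq \pi$ enters: it supplies the comparison/convexity estimate that forces a uniform lower bound on the tube radii and on the distances between components, ruling out both modes of degeneration. Making these bounds precise, using the geometry of the tubular neighborhood of the singular locus together with Hodgson--Kerckhoff's deformation theory to prevent the tube from pinching, is the technically demanding part of the argument.

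Granting no degeneration, each lift extends continuously to $t=1$, landing on a cone structure with all cone angles equal to $0$, that is, a complete finite-volume cusped hyperbolic structure on $X \setminus \Sigma$. By the Mostow--Prasad rigidity~\cite{mostow1973strong, prasad1973strong} this structure is unique, so both lifts terminate at the same point $g_{0}$. To finish, I invoke uniqueness of lifts: since $\mathcal{C}$ is Hausdorff and $\Theta$ is a local homeomorphism (Theorem~\ref{thm:local}), two continuous lifts of $\gamma$ that agree at $t=1$ must agree on a subset of $[0,1]$ that is open (by the local homeomorphism property) and closed (by Hausdorffness), hence on all of $[0,1]$. Evaluating the common lift at $t=0$ gives $g = g'$, which proves that $\Theta|_{\mathcal{C}_{[0,\pi]}}$ is injective.
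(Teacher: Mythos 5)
Your proposal is correct and follows essentially the same route as the paper, which derives Theorem~\ref{thm:global} from the Mostow--Prasad rigidity of $g_{0}$, the local rigidity (Theorem~\ref{thm:local}) used for path lifting, and the no-degeneration statement for decreasing angles at most $\pi$ (Theorem~\ref{thm:decrease}). The precompactness/tube-radius argument you sketch as the ``technically demanding part'' is precisely the content of Theorem~\ref{thm:decrease}, which the paper (like you) treats as the key cited input rather than reproving it.
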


The global rigidity is not known 
when cone angles exceed $\pi$. 
Theorem~\ref{thm:global} follows from 
the Mostow--Prasad rigidity for $g_{0}$ 
and Theorems~\ref{thm:local} and \ref{thm:decrease}.

\begin{thm}[Kojima~\cite{kojima1998deformations}]
\label{thm:decrease}
Let $g \in \mathcal{C}$. 
Suppose that $\Theta (g) = (\theta_{1}, \dots , \theta_{n}) \in [0,\pi]^{n}$. 
Then there is $A \subset \mathcal{C}$ such that 
$g \in A$ and 
$\Theta |_{A} \colon A \to [0,\theta_{1}] \times \dots \times [0,\theta_{n}]$ is a homeomorphism. 
In other words, 
we can obtain a continuous family of cone structures from $g$ to $g_{0}$ 
by arbitrarily decreasing cone angles. 
\end{thm}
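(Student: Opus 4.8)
The plan is to prove the statement by the continuity (path-lifting) method, exploiting that $\Theta$ is a local homeomorphism. Write $B = [0,\theta_{1}] \times \cdots \times [0,\theta_{n}]$ for the target box, and note that every point of $B$ has all coordinates at most $\pi$. By the local rigidity (Theorem~\ref{thm:local}), about each point of $\mathcal{C}$ there is a neighborhood on which $\Theta$ restricts to a homeomorphism onto an open box of angle parameters; thus deformations moving the cone angles exist and are locally unique. My goal is to produce a global continuous section $s \colon B \to \mathcal{C}$ with $s(\Theta(g)) = g$; then $A := s(B)$ is the required family, with $\Theta|_{A}$ a continuous bijection onto $B$ whose inverse is $s$, hence a homeomorphism.

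The construction of $s$ reduces, since $B$ is simply connected, to establishing the path-lifting property for $\Theta$ over $B$: every path in $B$ starting at $\Theta(g)$ lifts to a path in $\mathcal{C}$ starting at $g$. Fix such a path $\gamma \colon [0,1] \to B$ and let $J \subset [0,1]$ be the set of parameters over which the lift $\tilde{\gamma}$ starting at $g$ exists in $\mathcal{C}$. Then $J$ is nonempty and, by local rigidity together with the Hausdorff property of $\mathcal{C}$, relatively open in $[0,1]$. It remains to prove that $J$ is closed, which amounts to showing that a convergent sequence of cone angles in $B$ is realized by a convergent sequence of cone structures, that is, that the cone structures along the lift do not degenerate.

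This non-degeneration step is where the constraint that all cone angles are at most $\pi$ is essential, and I expect it to be the main obstacle. First, the Schl\"afli formula for cone-manifolds gives $\partial \vol / \partial \theta_{i} = -\ell_{i}/2 \leq 0$, where $\ell_{i}$ is the length of the singular geodesic $\Sigma_{i}$, so the volume is monotone in each cone angle and consequently bounded, namely $\vol(g) \leq \vol \leq \vol(g_{0})$, uniformly over all cone structures with angles in $B$. Second, and crucially, for cone angles at most $\pi$ one obtains a uniform lower bound on the radius of an embedded tube about each $\Sigma_{i}$: the condition $\theta_{i} \leq \pi$ makes the normal geometry around each singular geodesic convex enough that distinct singular components cannot collide and no singular geodesic can shrink to a point. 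This is exactly the mechanism that fails once cone angles exceed $\pi$, which is what makes the degenerations constructed later in this paper possible. With volume bounded above and tube radii bounded below, a geometric compactness argument for cone-manifolds extracts a subsequential limit that is again a finite-volume hyperbolic cone structure on $(X,\Sigma)$ carrying the limiting cone angles; by the Hausdorff property this limit is the unique lift, so the endpoint lies in $J$. When the limiting angles are all $0$, the same bounds together with the convergence of cone structures to a cusped structure recorded in the introduction, and the Mostow--Prasad rigidity for $g_{0}$, identify the limit with $g_{0}$. Hence $J$ is closed, so $J = [0,1]$ and the lift exists over the whole path.

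Having established path-lifting and local uniqueness of lifts, the standard covering-space argument over the simply connected base $B$ produces the continuous section $s$ with $s(\Theta(g)) = g$: lifts of homotopic paths share endpoints by local uniqueness, so $s$ is well defined and continuous. Setting $A = s(B)$ then finishes the proof as described above. The openness of $J$ and this final assembly are formal consequences of local rigidity; the entire weight of the argument rests on the uniform tube-radius estimate under the hypothesis $\theta_{i} \leq \pi$.
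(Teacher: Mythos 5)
A preliminary point: the paper does not prove Theorem~\ref{thm:decrease} at all --- it is imported from Kojima~\cite{kojima1998deformations} and used as a black box, together with Theorem~\ref{thm:local} and the Mostow--Prasad rigidity, to deduce Theorem~\ref{thm:global}. So your attempt can only be measured against Kojima's original argument. Its skeleton you reproduce correctly: openness of the set of realizable angle parameters from Hodgson--Kerckhoff local rigidity, closedness as the locus of all difficulty, with the hypothesis $\theta_{i} \leq \pi$ entering only there. But the step you yourself say carries ``the entire weight of the argument'' is asserted rather than proved, and the assertion you lean on is not the right one.

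Concretely: a \emph{uniform} lower bound on the radius of embedded tubes about the $\Sigma_{i}$, over all cone structures with angles in the box, is not a formal consequence of $\theta_{i} \leq \pi$ and ``convexity of the normal geometry''; such uniform tube estimates are themselves deep theorems, and they are not how Kojima argues. What is elementary is only this: an honest cone structure in $\mathcal{C}$ cannot have two cone loci meeting transversally when both angles are $\leq \pi$, because the link of an intersection point would be a spherical cone metric on $S^{2}$ with four cone points of angles $\theta_{i},\theta_{i},\theta_{j},\theta_{j}$, whose Gauss--Bonnet area $2(\theta_{i}+\theta_{j})-4\pi \leq 0$ is impossible. (This is exactly the obstruction that disappears when angles exceed $\pi$, enabling the degenerations of Theorem~\ref{thm:main}.) However, this constrains only a limit that is already known to be a cone structure; it says nothing about a sequence of cone structures that fails to converge at all. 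Ruling that out is the actual content of Kojima's compactness analysis, whose conclusion is the dichotomy quoted in Remark~\ref{rem:andreev}: cone loci can collide in a limit only if the volumes tend to zero or a $2$-dimensional Euclidean sub-cone-manifold appears, and both alternatives must then be excluded. Your volume bounds do not do this job as stated: Schl\"afli gives monotonicity of $\vol$ only along paths on which the angles move monotonically, and the upper bound $\vol \leq \vol(g_{0})$ presupposes a deformation connecting the structure to $g_{0}$, which is precisely what is being proved; the a~priori bound one actually uses is of Gromov-norm type, $\vol \leq v_{3}\,\|X \setminus \Sigma\|$. (A lesser, fixable point: path lifting over the simply connected box does not formally yield the monodromy theorem for a mere local homeomorphism; in this setting one should lift along the affine segments of the box, e.g.\ rays of monotonically decreasing angles.) In sum, your proposal is a correct outline of the standard strategy, but the non-degeneration step that makes it a proof --- the geometric-limit analysis under $\theta_{i} \leq \pi$ --- is missing, not merely deferred.
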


Similar results are known for 3-dimensional hyperbolic cone-manifolds with vertices. 
The local rigidity for cone angles less than $2\pi$ was proved 
by Mazzeo and Montcouquiol~\cite{mazzeo2011infinitesimal}, 
and independently Weiss~\cite{weiss2013deformation}. 
The global rigidity for cone angles at most $\pi$ was proved 
by Weiss~\cite{weiss2007global}.

Theorem~\ref{thm:main} is the main result of this paper. 
It implies that 
Theorem~\ref{thm:decrease} cannot be generalized 
for cone angles less than $2\pi$. 
A \textit{continuous degenerating family} of cone structures on $(X, \Sigma)$ 
is a continuous map $\gamma \colon [0,1) \to \mathcal{C}$ such that 
$\lim_{x \to 1} \Theta (\gamma (x)) \in [0,2\pi]^{n}$ 
but $\gamma (x)$ does not converge in $\mathcal{C}$ as $x \to 1$.

\begin{thm}
\label{thm:main}
Let $L = L_{1} \sqcup \dots \sqcup L_{4} \subset T^{2} \times I$ 
be a link in the thickened torus as indicated in Figure~\ref{fig: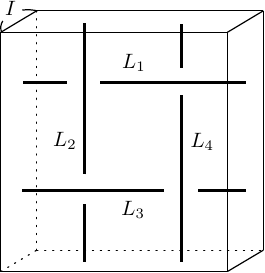}, 
where $I$ is an open interval. 
Then there is a continuous degenerating family of cone structures on $(T^{2} \times I, L)$ 
with decreasing cone angles. 
In this degeneration, 
two of the cone loci $L$ intersect transversally. 
Two simultaneous intersections may occur. 
\end{thm}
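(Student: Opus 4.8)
The plan is to realize the desired cone structures by an explicit polyhedral gluing and then to exhibit the degeneration geometrically, as a collision of two singular geodesics, by driving the dihedral parameters to the boundary of the region where a genuine cone metric exists. First I would fix the combinatorics. From the alternating diagram of $L$ on the torus I would extract a decomposition of $(T^{2}\times I)\setminus L$ into four isometric copies of a single polyhedron $P$, in the spirit of the Menasco-type ideal decomposition of alternating link complements. The polyhedron $P$ should carry ideal vertices accounting for the two ends $T^{2}\times\partial I$ (which become torus cusps) and edges that, after gluing, become the four cone loci $L_{1},\dots,L_{4}$. Under this decomposition the cone angle $\theta_{i}$ at $L_{i}$ is the sum of the dihedral angles of the copies of $P$ meeting along the corresponding edge, and the cyclic symmetry of the four-copy configuration should collapse these relations to a small number of independent angle parameters.

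Second, I would determine the moduli of $P$ as a hyperbolic polyhedron of the fixed combinatorial type. I would parametrize its isometry classes by the dihedral angles, compute the Gram matrix of the outward normals of the bounding planes, and solve for the admissible region of angles realizing an honest (nondegenerate) polyhedron. This is precisely the explicit description of the isometry types of $P$, and it is where an Andreev-type existence and uniqueness statement together with a direct coordinate computation enter. From this description I would read off, as explicit functions of the angle parameters, the geometric quantities I must monitor along the deformation: the lengths of the singular edges and, crucially, the distance between the two edges destined to collide.

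Third, I would build the degenerating family $\gamma\colon[0,1)\to\mathcal{C}$. Staying inside the admissible region, I would choose a one-parameter family of polyhedra $P_{x}$ whose gluing yields cone structures $\gamma(x)$ with all cone angles strictly below $2\pi$ and componentwise \emph{decreasing} in $x$, while simultaneously pushing $P_{x}$ toward a boundary configuration in which a face degenerates and two of the singular edges acquire a common endpoint. As $x\to 1$ the cone angles would converge to a point of $[0,2\pi)^{n}$, consistent with Theorem~\ref{thm:local}, but the separation between the two relevant singular geodesics would tend to $0$; the $\mathbb{Z}/2$-type symmetry of the configuration forces the two edges to meet at two points at once, accounting for the two simultaneous intersections. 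At the limit the singular set is no longer a disjoint union of embedded geodesics, so $\gamma(x)$ admits no limit in $\mathcal{C}$, which is exactly a continuous degenerating family in the sense defined above.

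The main obstacle is the coupling in the third step between \emph{monotonicity of the cone angles} and the \emph{collision of the loci}. Kojima's Theorem~\ref{thm:decrease} shows this coupling is impossible while all cone angles remain in $[0,\pi]$, so the path must be driven through angles in $(\pi,2\pi)$, and I must verify from the explicit polyhedral moduli that the two competing effects are compatible: that the dihedral angles can be decreased all the way to the degenerate configuration without $P_{x}$ leaving the admissible region prematurely, and that the edge-separation genuinely vanishes in step with the decreasing angles rather than before them. Establishing these explicit monotonicity and vanishing estimates from the Gram-matrix description of $P$ is the technical heart of the argument; by contrast, the combinatorial decomposition of the complement and the bookkeeping of cone angles as sums of dihedral angles should be comparatively routine.
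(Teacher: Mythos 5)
Your framework is essentially the paper's: decompose a symmetric cone structure on $(T^{2}\times I,L)$ into four copies of a single polyhedron (in the paper, a tetragonal trapezohedron with right dihedral angles except for $\alpha_{i}=\theta_{i}/2$ at the four edges $\hat{L}_{i}$ coming from the $L_{i}$), describe its isometry types explicitly in terms of the dihedral angles, and run a decreasing-angle path into the frontier of the admissible angle region, where an edge collapses and two cone loci collide. However, the step you set aside as ``the technical heart'' is precisely the content of the theorem, and your proposal does not contain the idea that makes it work. What is needed is that the admissible region $\mathcal{A}\subset[0,\pi)^{4}$ is \emph{not downward closed}: the paper proves (i) every quadruple $(\alpha,\alpha,\alpha,\alpha)$ with $\alpha<\pi$ is realizable (Corollary~\ref{cor:abab}), and (ii) some quadruple in $[0,\pi)^{4}$ is not realizable, e.g.\ $(\arccos(1-\sqrt{2}),\arccos(1-\sqrt{2}),0,0)\notin\mathcal{A}$ (Theorems~\ref{thm:bound} and~\ref{thm:cube}). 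Given (i) and (ii), the degenerating family is immediate: decrease the angles from $(\alpha,\dots,\alpha)$ with $\alpha>\max_{i}\alpha_{i}$ toward the non-realizable quadruple; by continuity the path must leave $\mathcal{A}$ through its frontier, and Theorem~\ref{thm:bound} identifies the frontier with configurations in which an edge $\widetilde{Q}_{i}\widetilde{P}_{i+1}$ of the trapezohedron collapses, i.e.\ $L_{i}$ and $L_{i+1}$ intersect. (Your concern that the collision occur ``in step with'' the decreasing angles is beside the point; only the two endpoints of the path matter.) Without (i) and (ii) nothing forces a decreasing path to degenerate at all, and no soft argument supplies them: your appeal to an ``Andreev-type existence and uniqueness statement'' is unavailable here, since the relevant dihedral angles are obtuse and, as the paper emphasizes, the region $\mathcal{A}$ is not cut out by linear inequalities; it must be computed by hand, which the paper does in the upper half-space model (Lemmas~\ref{lem:angle}--\ref{lem:parameter}).

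Your mechanism for the last assertion of the theorem is also incorrect. You claim the $\Z/2$-type symmetry forces the two colliding loci to meet at two points at once. In fact, when the edge between $\hat{L}_{i}$ and $\hat{L}_{i+1}$ collapses, the collision point is a common endpoint of the arcs $\hat{L}_{i}$ and $\hat{L}_{i+1}$; these endpoints are the branch points of the two-fold coverings $L_{i}\to\hat{L}_{i}$, hence are fixed by all of $\Gamma\cong\Z/2\Z\times\Z/2\Z$ and lift to a \emph{single} point upstairs, so $L_{i}$ and $L_{i+1}$ meet in exactly one point. The ``two simultaneous intersections'' in the theorem refer instead to two \emph{distinct pairs} of loci colliding at the same moment, which happens when the path exits through a point of $\partial_{i}\mathcal{A}^{\prime}\cap\partial_{i+1}\mathcal{A}^{\prime}$; the paper exhibits such a point, $\bigl(\tfrac{1-\sqrt{5}}{2},\tfrac{1-\sqrt{5}}{2},\tfrac{1-\sqrt{5}}{2},1\bigr)$, in Corollary~\ref{cor:double}.
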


\fig[width=6cm]{dcda-weave.pdf}{A link $L$ in $T^{2} \times I$. (A fundamental domain of $T^{2} \times I$ is drawn.)}

Links in $T^{2} \times I$ have been studied in several situations. 
One of them concerns 
the hyperbolic structures on the complements. 
By projecting a link in $T^{2} \times I$ to $T^{2}$, 
we obtain a diagram of the link in $T^{2}$. 
The diagram gives the notion of alternating links in $T^{2} \times I$. 
In \cite{adams2020generalized, champanerkar2019geometry}, 
the hyperbolic structure on the complement of an alternating link in $T^{2} \times I$ 
is constructed by gluing ideal bipyramids given from the diagram. 
The above $L$ is one of the simplest examples of alternating links, 
and it was described in detail 
by Champanerkar, Kofman, and Purcell~\cite{champanerkar2016geometrically}.

We will construct cone structures on $(T^{2} \times I, L)$ 
by polyhedral decomposition, 
which is a generalization of the above construction for cusped hyperbolic structures. 
If a hyperbolic cone structure on $(T^{2} \times I, L)$ has sufficient symmetry, 
the hyperbolic cone-manifold can be decomposed into 
four copies of a certain polyhedron, 
called a tetragonal trapezohedron (a.k.a. an antibipyramid). 
A tetragonal trapezohedron is the dual of a square antiprism. 
Thus we will be reduced to considering 
isometric types of a hyperbolic tetragonal trapezohedron.

We need to know whether a tetragonal trapezohedron with the assigned dihedral angles 
can be realized in the hyperbolic space. 
The most powerful result for our problem is Andreev's theorem~\cite{andreev1970convex}, 
which gives 
the condition by linear inequalities of dihedral angles 
for a finite volume hyperbolic polyhedron with non-obtuse dihedral angles 
(see \cite{roeder2007andreev} for details of compact cases). 
For obtuse dihedral angles, however, 
Diaz~\cite{diaz1997non} gave an example that 
no linear inequalities of dihedral angles hold. 
We will obtain another such example. 
Since there are no general tools for our problem, 
we need to explicitly describe isometric types of a tetragonal trapezohedron.

\section*{Acknowledgments} 
This work is supported by 
JSPS KAKENHI Grant Numbers 15H05739 and 19K14530, 
and JST CREST Grant Number JPMJCR17J4.

\section{An alternating link in the thickened torus}
\label{section:link}

We consider a link  $L = L_{1} \sqcup \dots \sqcup L_{4} \subset T^{2} \times I$ 
as indicated in Figure~\ref{fig:dcda-weave.pdf}. 
Let $\mathcal{C}$ denote the space of cone structures on $(T^{2} \times I, L)$ 
as in Section~\ref{section:intro}, 
where the components of $T^{2} \times \partial I$ keep to be two cusps. 
Note that any of the cone angles cannot be equal to $2\pi$. 
For instance, if the cone angle at $L_{1}$ is equal to $2\pi$, 
the cone loci $L_{2}$ and $L_{4}$ are parallel to a cusp, 
which is impossible. 
The map $\Theta \colon \mathcal{C} \to [0,2\pi)^{4}$ 
assigns the cone angles at $L_{i}$. 
If each $L_{i}$ is a cusp, 
we obtain $g_{0} \in \mathcal{C}$, 
which is the unique element of $\mathcal{C}$ 
satisfying that $\Theta (g_{0}) = (0, \dots ,0)$. 
Let $\mathcal{C}_{0} \subset \mathcal{C}$ denote the component containing $g_{0}$.

We consider symmetry of $(T^{2} \times I, L)$. 
There is an automorphism $\gamma_{1}$ of $(T^{2} \times I, L)$ 
that fixes each point of $L_{1}$ and $L_{3}$, 
and fixes each of $L_{2}$ and $L_{4}$ as sets, 
but reverses the orientations of $L_{2}$ and $L_{4}$. 
The fixed point set of $\gamma_{1}$ is two open annuli containing $L_{1}$ and $L_{3}$, 
whose ends are contained in neighborhood of $T^{2} \times \partial I$. 
By changing the roles of $(L_{1}, L_{3})$ and $(L_{2}, L_{4})$, 
we obtain an automorphism $\gamma_{2}$ of $(T^{2} \times I, L)$. 
The automorphisms $\gamma_{1}$ and $\gamma_{2}$ are determined up to isotopy. 
Let $\Gamma$ denote the group generated by $\gamma_{1}$ and $\gamma_{2}$. 
We choose $\gamma_{1}$ and $\gamma_{2}$ so that 
the group $\Gamma$ is isomorphic to $\Z / 2\Z \times \Z / 2\Z$.

We call a cone structure $g \in \mathcal{C}$ \textit{symmetric} 
if the $\Gamma$-action on $(T^{2} \times I, L; g)$ is isotopic to an isometric action. 
Let $\mathcal{C}_{\mathrm{sym}}$ denote the set of symmetric cone structures in $\mathcal{C}$.

\begin{prop}
\label{prop:sym}
The component $\mathcal{C}_{0}$ is contained in $\mathcal{C}_{\mathrm{sym}}$. 
\end{prop}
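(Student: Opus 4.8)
The plan is to recast symmetry as a fixed-point condition and then show that the fixed-point set is both open and closed in $\mathcal{C}$, so that it must contain the whole component $\mathcal{C}_{0}$ of $g_{0}$. Each automorphism $\gamma \in \Gamma$ acts on $\mathcal{C}$ by pulling back cone metrics, $g \mapsto \gamma^{*} g$; this descends to a well-defined continuous action on $\mathcal{C}$ because $\gamma$ carries isometries isotopic to the identity to isometries isotopic to the identity. By the definition of symmetry, $g$ is symmetric exactly when $\gamma_{1}^{*} g = g$ and $\gamma_{2}^{*} g = g$ in $\mathcal{C}$, i.e. when $g$ lies in the fixed-point set $\mathcal{C}^{\Gamma} = \mathrm{Fix}(\gamma_{1}^{*}) \cap \mathrm{Fix}(\gamma_{2}^{*})$; thus $\mathcal{C}_{\mathrm{sym}} = \mathcal{C}^{\Gamma}$. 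Since each $\gamma_{i}$ preserves every component $L_{j}$ as a set, the cone angle at $L_{j}$ is unchanged under pullback, so $\Theta (\gamma_{i}^{*} g) = \Theta (g)$; that is, $\Theta$ is $\Gamma$-invariant, which will be the key to the openness argument. Two easy ingredients come first: the fixed-point set of a continuous action on a Hausdorff space is closed, and $\mathcal{C}$ is Hausdorff by Theorem~\ref{thm:local}, so $\mathcal{C}^{\Gamma}$ is closed; and $g_{0} \in \mathcal{C}^{\Gamma}$ by the Mostow--Prasad rigidity, since each $\gamma_{i}$ induces an automorphism of the fundamental group that is realized by an isometry of the complete cusped structure $g_{0}$, whence $\gamma_{i}^{*} g_{0} = g_{0}$.

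The main step is the openness of $\mathcal{C}^{\Gamma}$, which I would deduce from the local rigidity. Let $g \in \mathcal{C}^{\Gamma}$. By Theorem~\ref{thm:local}, choose a neighborhood $V$ of $g$ on which $\Theta$ is injective. Using continuity of the $\Gamma$-action together with $\gamma_{i}^{*} g = g$, shrink to a neighborhood $U \subset V$ of $g$ with $\gamma_{i}^{*} U \subset V$ for $i = 1, 2$. For any $g' \in U$, both $g'$ and $\gamma_{i}^{*} g'$ lie in $V$, while $\Theta (\gamma_{i}^{*} g') = \Theta (g')$ by the $\Gamma$-invariance of $\Theta$; injectivity of $\Theta$ on $V$ then forces $\gamma_{i}^{*} g' = g'$. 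Hence $U \subset \mathcal{C}^{\Gamma}$, so $\mathcal{C}^{\Gamma}$ is open. Being both open and closed, $\mathcal{C}^{\Gamma} = \mathcal{C}_{\mathrm{sym}}$ is a union of connected components of $\mathcal{C}$; since it contains $g_{0}$, it contains the component $\mathcal{C}_{0}$, giving $\mathcal{C}_{0} \subset \mathcal{C}_{\mathrm{sym}}$.

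The delicate points, which I expect to be the genuine obstacle, are verifying that the pullback action of $\Gamma$ on $\mathcal{C}$ is truly continuous in the pointed Gromov--Hausdorff topology, and confirming that fixing the two generators assembles into a coherent isometric $\Gamma$-action rather than merely two commuting involutions that are isometric only up to isotopy. The latter should again be controlled by the local rigidity, which pins down the isometry realizing each $\gamma_{i}$ uniquely near a fixed cone structure, so that the realizing isometries compose correctly and reproduce the abelian group $\Gamma \cong \Z/2\Z \times \Z/2\Z$.
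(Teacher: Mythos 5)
Your proof is correct and follows essentially the same route as the paper's: Mostow--Prasad rigidity puts $g_{0}$ in $\mathcal{C}_{\mathrm{sym}}$, and the local rigidity (Hausdorffness plus $\Theta$ being a local homeomorphism) shows $\mathcal{C}_{\mathrm{sym}}$ is closed and open, hence a union of components containing $\mathcal{C}_{0}$. The paper states this in three sentences and leaves implicit exactly the details you supply (the fixed-point reformulation, the $\Gamma$-invariance of $\Theta$, and the openness argument via local injectivity), so your write-up is a faithful elaboration rather than a different argument.
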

\begin{proof}
The Mostow--Prasad rigidity implies that $g_{0} \in \mathcal{C}_{\mathrm{sym}}$. 
The local rigidity implies that 
the set $\mathcal{C}_{\mathrm{sym}}$ is closed and open subset of $\mathcal{C}$. 
Hence $\mathcal{C}_{\mathrm{sym}}$ is the union of components of $\mathcal{C}$, 
one of which is $\mathcal{C}_{0}$. 
\end{proof}

In fact, the space $\mathcal{C}_{\mathrm{sym}}$ is connected. 
Hence $\mathcal{C}_{0} = \mathcal{C}_{\mathrm{sym}}$. 
This will be shown in Corollary~\ref{cor:connected}.

In this paper, we cannot treat non-symmetric cone structures. 
We do not even know whether there exists a non-symmetric cone structure. 
It is a really surprising example if it exists. 
Nonetheless, if the global rigidity for $\mathcal{C}$ holds, 
then $\mathcal{C}_{\mathrm{sym}} = \mathcal{C}$. 
This will be shown in Corollary~\ref{cor:global}.

For $g \in \mathcal{C}_{\mathrm{sym}}$ and the isometric $\Gamma$-action, 
the quotient space $(T^{2} \times I, L; g) / \Gamma$ is isometric to 
a (tetragonal) trapezohedron in the hyperbolic 3-space 
as indicated in Figure~\ref{fig: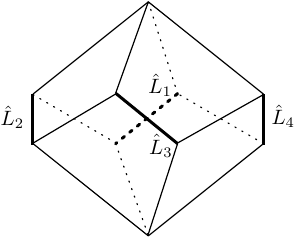}. 
The edge $\hat{L}_{i}$ is the image of the cone locus $L_{i}$. 
The two ideal vertices disjoint from $\hat{L}_{i}$ 
correspond to the components of $T^{2} \times \partial I$. 
The faces are totally geodesic. 
If $\Theta (g) = (\theta_{1}, \dots , \theta_{4})$, 
the dihedral angles are $\pi/2$ 
except the four angles $\alpha_{i} = \theta_{i}/2$ at $\hat{L}_{i}$. 
We remark that $\hat{L}_{i}$ degenerates to an ideal vertex if $\alpha_{i} = 0$. 
We use the term ``trapezohedron'' also for such a degenerated polyhedron. 
Thus we decompose a symmetric cone-manifold $(T^{2} \times I, L; g)$ into four trapezohedra. 
The four trapezohedra correspond to the complementary regions 
of the diagram of $L$ in $T^{2}$ in Figure~\ref{fig:dcda-weave.pdf}. 

Conversely, we can obtain a symmetric hyperbolic cone structure in $\mathcal{C}_{\mathrm{sym}}$
by gluing four trapezohedra. 
The way of gluing is as follows. 
Let $T$ be a hyperbolic trapezohedron 
with right dihedral angles except at $\hat{L}_{i}$. 
We color the faces of $T$ black and white 
in a ``checkerboard'' fashion 
so that two faces with a common color are adjacent only along $\hat{L}_{i}$. 
Take four copies $T_{00}, T_{01}, T_{10}, T_{11}$ of $T$. 
For $j=0,1$, glue $T_{j0}$ and $T_{j1}$ along the black faces, 
and glue $T_{0j}$ and $T_{1j}$ along the white faces. 
Here corresponding vertices are matched. 
This construction can be called ``double of double''. 

\fig[width=6cm]{dcda-trapezohedron.pdf}{A tetragonal trapezohedron}

The above argument for $g_{0}$ gives a decomposition of $T^{2} \times I \setminus L$ 
into four regular ideal octahedra. 
This decomposition was given 
in \cite{adams2020generalized, champanerkar2016geometrically}, 
and the ``double of double'' construction was described in detail 
in \cite{kolpakov2013hyperbolic}. 
We remark that $T^{2} \times I \setminus L$ is homeomorphic to 
the complement of the minimally twisted 6-chain link.

We summarize the above argument in the following proposition. 

\begin{prop}
\label{prop:decomp}
There is a natural one-to-one correspondence between $\mathcal{C}_{\mathrm{sym}}$ 
and the set of hyperbolic trapezohedra 
with right dihedral angles except at $\hat{L}_{i}$. 
\end{prop}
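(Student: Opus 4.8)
The plan is to make the two constructions sketched above fully precise—passing to the $\Gamma$-quotient and the ``double of double''—and to verify that they define mutually inverse bijections intertwining cone angles with dihedral angles. First I would define the forward map $\Phi \colon \mathcal{C}_{\mathrm{sym}} \to \{\text{hyperbolic trapezohedra}\}$. Given $g \in \mathcal{C}_{\mathrm{sym}}$, fix an isometric representative of the $\Gamma$-action on $(T^{2} \times I, L; g)$. Because the fixed-point set of an isometric involution of a hyperbolic cone-manifold is totally geodesic, the fixed annuli of $\gamma_{1}$ and $\gamma_{2}$ descend to totally geodesic faces of the quotient, and $(T^{2} \times I, L; g)/\Gamma$ is a hyperbolic trapezohedron $T$ in which each cone locus $L_{i}$ maps to an edge $\hat{L}_{i}$. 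A local computation in the normal disk to $L_{i}$—on which the involution fixing $L_{i}$ pointwise acts as a reflection—shows that the dihedral angle at $\hat{L}_{i}$ equals $\theta_{i}/2 = \alpha_{i}$, while at every other edge the faces meet at $\pi/2$. I would then set $\Phi(g) = T$.

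Next I would define the inverse $\Psi$ by the ``double of double'' gluing of four copies $T_{00}, T_{01}, T_{10}, T_{11}$ of a given trapezohedron $T$. The key geometric fact is that doubling a hyperbolic polyhedron across a totally geodesic face yields a metric that is smooth across the image of that face; hence gluing first along the black and then along the white faces introduces no singularity along any glued wall. The only singular edges are the images of the $\hat{L}_{i}$: since each $\hat{L}_{i}$ lies on a single color of wall, doubling across that color turns the dihedral angle $\alpha_{i} = \theta_{i}/2$ into a cone angle $\theta_{i}$, while the second doubling merely produces a disjoint isometric copy. To conclude that $\Psi(T) \in \mathcal{C}_{\mathrm{sym}}$ I would identify the underlying pair of the glued object with $(T^{2} \times I, L)$—this is precisely the decomposition into four pieces recorded in \cite{champanerkar2016geometrically, kolpakov2013hyperbolic}—and observe that the four labeled copies carry an evident isometric $\Gamma \cong \Z/2\Z \times \Z/2\Z$ action permuting them, so the resulting structure is symmetric.

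Finally I would check that $\Phi$ and $\Psi$ are mutually inverse and natural. The identity $\Phi \circ \Psi = \mathrm{id}$ is immediate, since quotienting the double of double by its permutation action returns $T$. For $\Psi \circ \Phi = \mathrm{id}$, the point is that for a $(\Z/2\Z)^{2}$-action with the stated fixed loci the double of double of the quotient canonically reconstructs the total space together with its metric and $\Gamma$-action; the angle bookkeeping above shows the cone angles are restored. Naturality is the relation $\Theta(g) = (\theta_{1}, \dots, \theta_{4})$, which matches twice the tuple of non-right dihedral angles of $\Phi(g)$. I expect the main obstacle to be the well-definedness of $\Phi$, namely pinning down the isometric $\Gamma$-action up to conjugation by an isometry so that the quotient $T$ is determined up to isometry; this I would deduce from the finiteness of the isometry group of a finite-volume cone-manifold together with the local rigidity of Theorem~\ref{thm:local}. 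The remaining ingredients are either standard hyperbolic geometry (smoothness across totally geodesic walls) or a topological identification available in the cited literature.
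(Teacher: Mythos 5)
Your proposal follows essentially the same route as the paper: Proposition~\ref{prop:decomp} is explicitly stated there as a summary of the preceding discussion, namely the quotient construction $(T^{2} \times I, L; g)/\Gamma$ and the inverse ``double of double'' gluing, which is exactly the pair of mutually inverse maps you make precise. One small caveat: after the second doubling the two copies of each edge $\hat{L}_{i}$ are not disjoint --- they are joined at their endpoints (which lie on the faces of the second color) so as to form the circle $L_{i}$ --- but this does not affect your angle bookkeeping or the conclusion.
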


\section{Dihedral angles of a tetragonal trapezohedron}
\label{section:angle}

We consider hyperbolic trapezohedra 
(possibly $\hat{L}_{i}$ degenerates to an ideal vertex) 
with right dihedral angles except at $\hat{L}_{i}$. 
Let $\mathcal{A}$ denote the image of 
the map $\frac{1}{2} \Theta \colon \mathcal{C}_{\mathrm{sym}} \to [0,\pi)^{4}$. 
In other words, 
$(\alpha_{1}, \dots , \alpha_{4}) \in \mathcal{A}$ if and only if 
there exists a trapezohedron 
with dihedral angles $\alpha_{i}$ at $\hat{L}_{i}$ and $\pi/2$ at the other edges 
in the hyperbolic space.

\begin{thm}
\label{thm:rigidity}
The isometry class of a hyperbolic trapezohedron is determined by the element of $\mathcal{A}$. 
In other words,  
$\frac{1}{2} \Theta \colon \mathcal{C}_{\mathrm{sym}} \to \mathcal{A}$ is injective. 
\end{thm}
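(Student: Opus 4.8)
The plan is to prove the injectivity of $\frac{1}{2}\Theta$ by reconstructing the trapezohedron explicitly from its dihedral angles. By Proposition~\ref{prop:decomp} it suffices to show that a hyperbolic trapezohedron $T$ with right angles except at the four edges $\hat{L}_{1},\dots,\hat{L}_{4}$, where it has angles $\alpha_{1},\dots,\alpha_{4}$, is determined up to isometry by $(\alpha_{1},\dots,\alpha_{4})$. I would work in the upper half-space model and place one of the two ideal apices, say $N$, at $\infty$. Then the four faces meeting $N$ become vertical planes. Since the four edges through $N$ are among the right-angled edges (they join the alternately colored faces at $N$, so by the checkerboard condition none of them is an $\hat{L}_{i}$), the link of $N$ is a Euclidean rectangle, and these four planes lie over the four side-lines of a rectangle $R \subset \mathbb{C}$. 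Using the isometries fixing $\infty$, which act as similarities of $\mathbb{C}$, I can normalize $R$ to be axis-parallel with one side of unit length, leaving at most its aspect ratio as a modulus.

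The remaining four faces are hemispheres passing through the second ideal apex $S \in \mathbb{C}$. Each such hemisphere is orthogonal to its two neighbouring bottom faces and to one of the vertical top planes (all right-angled edges), while it meets the one remaining adjacent top plane along $\hat{L}_{i}$ at the prescribed angle $\alpha_{i}$. Orthogonality to a vertical plane forces the centre of the hemisphere onto the corresponding side-line of $R$; orthogonality of two hemispheres is the classical orthogonal-circles relation; and all four hemispheres pass through the single point $S$. I would record these as explicit algebraic equations in the centres, the radii, the point $S$, and the aspect ratio, and then place the hemispheres one at a time, each new one being pinned down by its right-angle conditions against the faces already constructed. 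This reduces the whole configuration to a small, explicitly parametrized family and expresses each $\cos\alpha_{i}$ as an explicit function of these coordinates, via the angle between the $i$-th top plane and the $i$-th hemisphere.

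It then remains to invert this relation, and I expect this to be the main obstacle, precisely because the $\alpha_{i}$ may be obtuse: as noted in the introduction, Andreev-type linear inequalities and the usual monotonicity or convexity arguments are unavailable in this range, so no general uniqueness theorem applies. Instead I would invert the explicit equations directly, exhibiting the normalized coordinates of $T$ as single-valued functions of $\cos\alpha_{1},\dots,\cos\alpha_{4}$, and checking that the correct branch is selected by the geometric sign conditions (centres on the prescribed side, positive radii, $S$ interior to $R$). Equivalently, and perhaps more transparently, one may phrase the argument through the Gram matrix $G$ of the eight outward face-normals $n_{1},\dots,n_{8}$ in $\mathbb{R}^{3,1}$, where $\langle n_{i},n_{i}\rangle = 1$ and $\langle n_{i},n_{j}\rangle = -\cos\phi_{ij}$ for adjacent faces: the sixteen adjacent off-diagonal entries are then $0$ or $-\cos\alpha_{i}$, and the plan is to use the rank-$4$, signature-$(3,1)$ condition together with the right-angle pattern to show that the remaining twelve non-adjacent entries are uniquely determined as well. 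Since $G$ determines the normals up to $\mathrm{O}(3,1)$, and hence $T$ up to isometry, this would show that $T$ is a function of $(\alpha_{1},\dots,\alpha_{4})$ alone, establishing the claimed rigidity.
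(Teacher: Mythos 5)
Your geometric setup is the same as the paper's: upper half-space model, one ideal apex at $\infty$ so that its four (right-angled) faces become vertical planes over a rectangle, the other four faces hemispheres through the second apex, orthogonality forcing the hemisphere centres onto the side-lines, and normalization by the similarities fixing $\infty$. This is exactly the paper's projection picture (the rectangle $P_{1}P_{2}P_{3}P_{4}$, the circles $C_{i}$ through $O$ with centres $R_{i}$ on the side-lines), and the paper likewise reduces the isometry class to a four-parameter family, namely $(q_{1},\dots,q_{4},t)$ with $\prod q_{i}=1$, via Lemma~\ref{lem:angle}.

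The gap is that the crucial step is announced but never performed: you write that you would ``invert the explicit equations directly'' and ``check that the correct branch is selected,'' but proving that this inversion is single-valued \emph{is} the theorem — you even flag it yourself as the main obstacle. Selecting a branch presupposes the solutions form discrete branches; what must be shown is that exactly one solution satisfies the geometric sign conditions, and no argument for this is given. The paper supplies it in Lemma~\ref{lem:parameter} by a one-variable monotonicity trick: writing $c_{i}=\cos\alpha_{i}$, the relation $c_{i}=(q_{i}-t)/\sqrt{1+t^{2}}$ inverts to $q_{i}=g_{i}(t):=t+c_{i}\sqrt{1+t^{2}}$, each $g_{i}$ is strictly increasing because $-1<c_{i}\leq 1$, hence $g(t)=\prod_{i}g_{i}(t)$ is strictly increasing on the region where all factors are positive, and the constraint $\prod q_{i}=1$, i.e.\ $g(t)=1$, pins down $t$ (and then every $q_{i}$) uniquely. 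Some such argument is indispensable, and your proposal contains no substitute for it. Your alternative Gram-matrix route has the same defect in sharper form: the claim that the rank-$4$, signature-$(3,1)$ condition plus the right-angle pattern determines the twelve non-adjacent entries is a restatement of the rigidity assertion, not a proof of it, and — as you note — in the obtuse range there is no Andreev-type or general uniqueness theorem to fall back on.
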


The local rigidity for $\mathcal{C}$ implies that 
$\mathcal{A} \subset [0,\pi)^{4}$ is an open subset. 
Define $\cos \colon [0,\pi)^{4} \to (-1,1]^{4}$ 
by $\cos (\alpha_{1}, \dots , \alpha_{4}) = (\cos \alpha_{1}, \dots , \cos \alpha_{4})$, 
which is a homeomorphism. 
We will often write $c_{i} = \cos \alpha_{i}$. 
Let $\mathcal{A}^{\prime} = \cos (\mathcal{A}) \subset (-1,1]^{4}$. 
We explicitly describe $\mathcal{A}^{\prime}$ instead of $\mathcal{A}$. 
From now on, the indices $i=1, \dots ,4$ are regarded modulo 4.

\begin{thm}
\label{thm:bound}
For $1 \leq i \leq 4$, let a function $\Phi_{i}$ be defined by 
\begin{align*}
\Phi_{i}(c_{1}, \dots , c_{4}) 
& = c_{i}c_{i+1}(c_{i}c_{i+1}+1)c_{i+2}c_{i+3} 
- c_{i}c_{i+1}(c_{i}+c_{i+1})(c_{i+2}+c_{i+3}) \\
& \quad + (c_{i}+c_{i+1})^{2} - c_{i}c_{i+1} -1. 
\end{align*}
Let $\partial \mathcal{A}^{\prime}$ denote 
the frontier of $\mathcal{A}^{\prime}$ in $(-1,1]^{4}$. 
Then 
\[
\mathcal{A}^{\prime} = 
\{ (c_{1}, \dots , c_{4}) \in  (-1,1]^{4} \mid 
\text{for any} \ i, \Phi_{i}(c_{1}, \dots , c_{4}) < 0 \ \text{or} \ c_{i}+c_{i+1} > 0 \}, 
\] 
and we can write 
$\partial \mathcal{A}^{\prime} = \bigcup_{i} \partial_{i} \mathcal{A}^{\prime}$, 
where  
\begin{align*}
\partial_{i} \mathcal{A}^{\prime} = 
\{ (c_{1}, \dots , c_{4}) \in  (-1,1]^{4} 
& \mid \Phi_{i}(c_{1}, \dots , c_{4}) = 0, \quad c_{i}+c_{i+1} \leq 0, \\
& \quad c_{i} \leq c_{i+2}, \quad c_{i+1} \leq c_{i+3} \}. 
\end{align*}
As $(c_{1}, \dots , c_{4}) \in \mathcal{A}^{\prime}$ 
approaches to $\partial_{i} \mathcal{A}^{\prime}$, 
the edge between $\hat{L}_{i}$ and $\hat{L}_{i+1}$ degenerates.
In particular, $\mathcal{A}^{\prime} \neq (-1,1]^{4}$. 
\end{thm}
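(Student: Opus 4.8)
The plan is to reduce the statement to an explicit realizability question for the Gram matrix of the face normals, exploiting that Proposition~\ref{prop:decomp} and Theorem~\ref{thm:rigidity} already identify $\mathcal{C}_{\mathrm{sym}}$ with the set of trapezohedra and guarantee that $\frac{1}{2}\Theta$ is injective, while Theorem~\ref{thm:local} gives that $\mathcal{A}^{\prime}$ is open. First I would fix the combinatorics of the tetragonal trapezohedron: the two ideal vertices are the poles, the eight kite faces split into an upper fan with outward normals $v_{1},\dots,v_{4}$ and a lower fan $w_{1},\dots,w_{4}$ (indices mod $4$), and the four edges $\hat{L}_{i}$ are the alternating equatorial edges, each shared by $v_{i}$ and $w_{i-1}$. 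Writing the normals in the Lorentzian space $\mathbb{R}^{3,1}$, the dihedral-angle data fixes most entries of the $8\times 8$ Gram matrix: consecutive faces within a fan are orthogonal, $\langle v_{i},w_{i}\rangle=0$ along the right-angle edge that will degenerate, and $\langle v_{i},w_{i-1}\rangle=-c_{i}$ along $\hat{L}_{i}$.

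Because the normals span a $4$-dimensional space, the Gram matrix must have rank $4$, and this constraint determines the remaining (non-adjacent) inner products as explicit rational functions of $c_{1},\dots,c_{4}$. In particular I would solve for $x_{i}=\langle v_{i+1},w_{i-1}\rangle$, which is precisely where the ``opposite'' angles $c_{i+2},c_{i+3}$ enter. Realizability then amounts to: the Gram matrix has signature $(3,1)$; every non-adjacent pair of faces satisfies $\langle\cdot,\cdot\rangle\le-1$ so that faces do not cross; and each edge has positive length. The edge between $\hat{L}_{i}$ and $\hat{L}_{i+1}$ is the line $v_{i}\cap w_{i}$, capped by $w_{i-1}$ at one end and by $v_{i+1}$ at the other, so its length is governed by the $4\times 4$ Gram determinant of
\[
G_{i}=\begin{pmatrix}1&0&-c_{i}&0\\0&1&x_{i}&-c_{i+1}\\-c_{i}&x_{i}&1&0\\0&-c_{i+1}&0&1\end{pmatrix}.
\]
The key computation is to show that, after substituting the value of $x_{i}$ found above, the vanishing of a positive multiple of this determinant is exactly $\Phi_{i}=0$, and that the edge collapses as $\Phi_{i}\to 0$. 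The role of the alternative $c_{i}+c_{i+1}>0$ is to record on which side of the line $v_{i}\cap w_{i}$ the two capping planes meet it: when $c_{i}+c_{i+1}>0$ the endpoints stay in the correct order and the edge survives regardless of the sign of $\Phi_{i}$, whereas when $c_{i}+c_{i+1}\le 0$ the endpoints coincide exactly at $\Phi_{i}=0$ and the edge has positive length precisely when $\Phi_{i}<0$.

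Finally I would assemble these per-edge conditions. Since $\mathcal{A}^{\prime}$ is open and nonempty, the realizable region is the component of $\bigcap_{i}\{\Phi_{i}<0\ \text{or}\ c_{i}+c_{i+1}>0\}$ containing the cusped point $(0,\dots,0)$; I would verify that this intersection already equals the claimed set, with no spurious extra components, and that along its frontier exactly one edge collapses, yielding $\partial\mathcal{A}^{\prime}=\bigcup_{i}\partial_{i}\mathcal{A}^{\prime}$. The supplementary inequalities $c_{i}\le c_{i+2}$ and $c_{i+1}\le c_{i+3}$ cutting out $\partial_{i}\mathcal{A}^{\prime}$ should single out, among the zeros of $\Phi_{i}$, those that are genuine limits of realizable trapezohedra rather than points where $\Phi_{i}$ happens to vanish for unrelated reasons. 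The main obstacle I anticipate is the middle step: carrying out the rank-$4$ elimination to obtain $x_{i}$ in closed form and then verifying the exact polynomial identity relating $\det G_{i}$ to $\Phi_{i}$, together with the delicate sign bookkeeping that produces the disjunction with $c_{i}+c_{i+1}$ and the auxiliary inequalities. Establishing completeness---that no other degeneration, such as a face receding to infinity or the signature dropping, occurs strictly before an edge collapse---will likewise require care.
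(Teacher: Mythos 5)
Your Gram--matrix strategy is genuinely different from the paper's proof, which never touches Lorentzian algebra: the paper projects the trapezohedron from its two ideal vertices onto $\mathbb{R}^{2}\subset\partial\mathbb{H}^{3}$, parametrizes \emph{all} configurations (realizable or not) by $(q_{1},\dots,q_{4},t)$ with $\prod_{i}q_{i}=1$ (Lemmas~\ref{lem:angle}, \ref{lem:slope}, \ref{lem:parameter}), and then translates the single existence inequality $(1-q_{i}q_{i+1})t<q_{i}+q_{i+1}$ into ``$\Phi_{i}<0$ or $c_{i}+c_{i+1}>0$''. Parts of your outline are sound: an edge collapse is indeed detected by the vanishing of the $4\times4$ Gram determinant of the two edge planes and the two capping planes, and the role you assign to $c_{i}+c_{i+1}>0$ is the right sign phenomenon (compare Remark~\ref{rem:bound}). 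But the proposal defers exactly the steps that constitute the proof.

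Three concrete gaps. First, the rank-$4$ elimination is the entire computation and is not carried out: one finds $\det G_{i}=(1-c_{i}^{2})(1-c_{i+1}^{2})-x_{i}^{2}$, so all dependence on $c_{i+2},c_{i+3}$ --- i.e.\ everything that makes $\Phi_{i}$ nontrivial --- enters only through the uncomputed $x_{i}$; moreover the rank-$4$ equations determining $x_{i}$ are polynomial, so you must also identify which root corresponds to an actual polyhedron, a branch-selection issue you never address. Second, your realizability criterion (signature $(3,1)$, all non-adjacent inner products $\le-1$, positive edge lengths) is asserted, not proved, and the middle condition is not valid in the regime the theorem is about: supporting planes of non-adjacent faces of a convex hyperbolic polyhedron can intersect (outside the polyhedron) once dihedral angles are allowed to be obtuse, which is precisely why no Andreev-type description exists here (cf.\ the paper's discussion of Diaz's example). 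Third, and most seriously, the ``completeness'' step you set aside --- that no degeneration other than the edge collapse occurs anywhere in $\bigcap_{i}\{\Phi_{i}<0\ \text{or}\ c_{i}+c_{i+1}>0\}$ --- is the heart of the theorem: it is what shows the realizable set fills out the whole claimed region rather than some open subset of it. Your fallback, taking the component of the cusped point and ``verifying there are no spurious components'', presupposes connectedness of the claimed region, which in the paper is only obtained as Corollary~\ref{cor:connected} \emph{after} Theorem~\ref{thm:bound} is proved, so the argument as proposed is circular. The paper avoids all three problems at once with Lemma~\ref{lem:parameter}: the configuration space $\mathcal{B}$, which includes the non-realizable configurations, maps bijectively onto all of $(-1,1]^{4}$, so realizability is exactly equivalent to the inequality of Lemma~\ref{lem:slope} and no continuity, compactness, or connectedness argument is needed. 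To rescue your approach you would need the analogous statement on the Gram side: for every $(c_{1},\dots,c_{4})\in(-1,1]^{4}$ an admissible rank-$4$ Gram completion exists and is unique.
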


\begin{rem}
\label{rem:bound}
Clearly $(1,1,1,1) \in \mathcal{A}^{\prime}$. 
Since $\Phi_{i}(1,1,1,1) = 0$, we need the condition $c_{i}+c_{i+1} > 0$ 
in the description of $\mathcal{A}^{\prime}$. 
\end{rem}

We consider a hyperbolic trapezohedron $T$ 
whose dihedral angles are $\alpha_{i}$ at $\hat{L}_{i}$ and $\pi/2$ at the other edges. 
We use the upper half-space model of hyperbolic 3-space. 
Regard $\partial \mathbb{H}^{3} = \mathbb{R}^{2} \cup \{\infty\}$. 
The trapezohedron $T$ has two ideal vertices disjoint from $\hat{L}_{i}$. 
We set them at $\infty$ and $O=(0,0)$. 
We project $T$ to $\mathbb{R}^{2} \subset \partial \mathbb{H}^{3}$ 
as indicated in Figure~\ref{fig: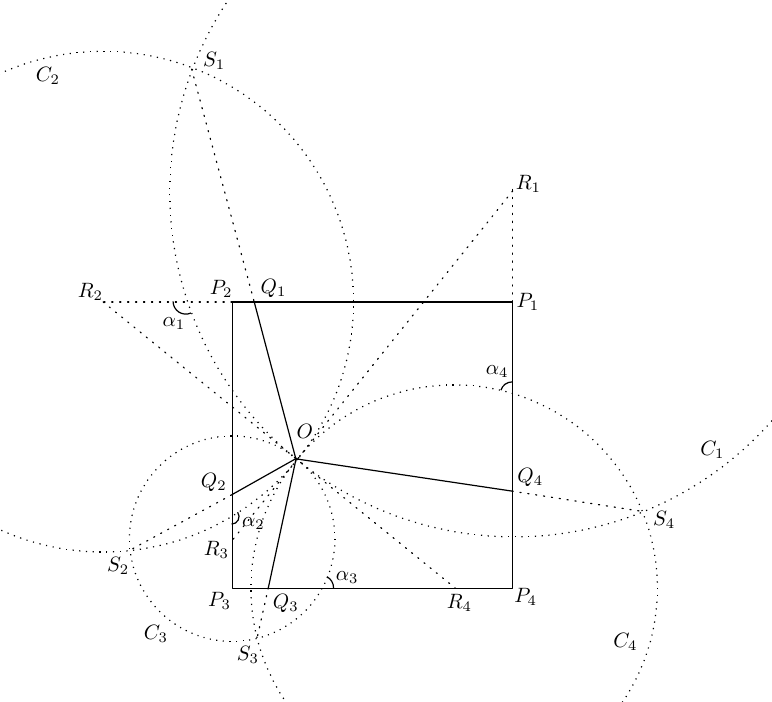}. 
The endpoints $\widetilde{P}_{i}$ and $\widetilde{Q}_{i}$ of the edge $\hat{L}_{i}$ 
are projected respectively to $P_{i}$ and $Q_{i}$. 
The images of the faces of $T$ adjacent to $O$ are four quadrilaterals $OQ_{i-1}P_{i}Q_{i}$. 
Their union is a rectangle $P_{1}P_{2}P_{3}P_{4}$. 
If $\alpha_{i} =0$, then $\hat{L}_{i} = P_{i} = Q_{i}$. 

Let $F_{i}$ denote the face of $T$ projected to $OQ_{i-1}P_{i}Q_{i}$. 
We extend $F_{i}$ to a totally geodesic plane $\widetilde{C}_{i}$, 
and let $C_{i}$ denote the boundary of $\widetilde{C}_{i}$. 
By considering the dihedral angles at $F_{i}$, 
we see that the circle $C_{i}$ is orthogonal to $P_{i-1}P_{i}$, 
and the angle between $C_{i}$ and $P_{i}P_{i+1}$ is $\alpha_{i}$ 
as indicated in Figure~\ref{fig:dcda-projection.pdf}.  
Since the dihedral angles at the edges of $T$ around $O$ are $\pi/2$, 
the circles $C_{i}$ and $C_{i+1}$ intersect orthogonally. 
Let $R_{i}$ denote the center of the circle $C_{i}$. 
Then $R_{i}$ is contained in the line $P_{i-1}P_{i}$. 
The segments $OR_{i}$ and $OR_{i+1}$ are orthogonal. 
Let $S_{i} = C_{i} \cap C_{i+1} \setminus O$. 
Then $Q_{i}$ is the intersection of the segments $OS_{i}$ and $P_{i}P_{i+1}$.

\fig[width=12cm]{dcda-projection.pdf}{Projection of a trapezohedron}

Conversely, take points $P_{i}$ and $R_{i}$ in $\mathbb{R}^{2}$ such that
$P_{1}P_{2}P_{3}P_{4}$ is a rectangle containing $O$, and 
$R_{i}$ is contained in the line $P_{i-1}P_{i}$. 
Let $C_{i}$ and $S_{i}$ be as above. 
Then the condition that the projection of a trapezohedron is obtained 
is as follows: 
\begin{itemize}
\item The segments $OS_{i}$ and $P_{i}P_{i+1}$ intersect, and 
\item their intersection $Q_{i}$ is distinct from $P_{i+1}$. 
\end{itemize}
The vertices $\widetilde{P}_{i}$ and $\widetilde{Q}_{i}$ are the intersection of 
the plane $\widetilde{C}_{i}$ and respectively the lines $\infty P_{i}$ and $\infty Q_{i}$. 
If $Q_{i} = P_{i+1}$, 
the edge between $\hat{L}_{i}$ and $\hat{L}_{i+1}$ degenerates, 
which corresponds to intersection of $L_{i}$ and $L_{i+1}$ in $T^{2} \times I$.

We may assume that 
\[
P_{1} = (p_{1},p_{2}), P_{2} = (-p_{3},p_{2}), 
P_{3} = (-p_{3},-p_{4}), P_{4} = (p_{1},-p_{4}), 
\]
where $p_{i} > 0$. 
Let $t$ denote the slope of the line $OR_{1}$. 
Then 
\[
R_{1} = (p_{1},tp_{1}), R_{2} = (-tp_{2},p_{2}), 
R_{3} = (-p_{3},-tp_{3}), R_{4} = (tp_{4},-p_{4}). 
\]
Let $q_{i} = \dfrac{p_{i+1}}{p_{i}}$. 
Since a positive constant multiple on $\mathbb{R}^{2}$ extends 
an isometry of $\mathbb{H}^{3}$, 
the isometry type of $T$ is determined by $q_{i}$ and $t$.

\begin{lem}
\label{lem:angle}
\[
\cos \alpha_{i} = \frac{q_{i}-t}{\sqrt{1+t^{2}}}. 
\]
\end{lem}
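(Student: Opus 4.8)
The plan is to work entirely in the boundary plane $\mathbb{R}^2 \subset \partial\mathbb{H}^3$ and compute $\cos\alpha_i$ as the Euclidean angle between the circle $C_i$ and the line $P_iP_{i+1}$ at their intersection point $Q_i$, exploiting the conformality of the upper half-space model. Since the dihedral angle of $T$ along $\hat{L}_i$ equals the angle between the two geodesic planes meeting there, and one of those planes is the vertical plane over the line $P_iP_{i+1}$ while the other is $\widetilde{C}_i$ (the plane over the circle $C_i$), the dihedral angle $\alpha_i$ is realized as the Euclidean angle between $C_i$ and the line $P_iP_{i+1}$ measured at $Q_i$. This reduces the whole problem to plane Euclidean geometry.

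First I would pin down the circle $C_i$ concretely. I know its center is $R_i$, which lies on the line $P_{i-1}P_i$, and that $C_i$ passes through the ideal vertex $O=(0,0)$; hence its radius is $|OR_i|$. For $i=1$ this gives center $R_1=(p_1,tp_1)$ and radius $\sqrt{p_1^2+t^2p_1^2}=p_1\sqrt{1+t^2}$, so $C_1$ is the circle $\{x : |x-R_1| = p_1\sqrt{1+t^2}\}$. The line $P_1P_2$ is the horizontal line $y=p_2$. The angle $\alpha_1$ between $C_1$ and this line at $Q_1$ can be read off from the standard fact that the angle a line makes with a circle equals the angle between the line and the tangent, equivalently it is governed by the signed distance from the circle's center to the line relative to the radius: if $d$ is the (perpendicular) distance from $R_1$ to the line $P_1P_2$, then $\cos\alpha_1 = d/\rho$ where $\rho$ is the radius (one must check the orientation/sign convention so that the correct branch is obtained). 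Here the perpendicular distance from $R_1=(p_1,tp_1)$ to the line $y=p_2$ is $|tp_1-p_2|$, and with $\rho=p_1\sqrt{1+t^2}$ this yields $\cos\alpha_1 = (p_2-tp_1)/(p_1\sqrt{1+t^2}) = (q_1-t)/\sqrt{1+t^2}$, using $q_1=p_2/p_1$ and checking that $p_2-tp_1 \geq 0$ in the relevant range.

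The symmetry of the construction — the points $P_i$ and $R_i$ are given by cyclically rotating the roles of the coordinates, and the slopes rotate by $\pi/2$ at each step — means the same computation applies verbatim to each index $i$ after a rotation of the plane by $\pi/2$, so it suffices to carry out the case $i=1$ carefully and then invoke the rotational symmetry to obtain the general formula. I would verify the $i=2$ case explicitly as a sanity check: $R_2=(-tp_2,p_2)$ sits on the vertical line $x=-p_3$ through $P_2P_3$, the radius is $p_2\sqrt{1+t^2}$, the distance from $R_2$ to the line $x=-p_3$ is $|{-tp_2}-({-p_3})| = |p_3 - tp_2|$, and dividing gives $(p_3-tp_2)/(p_2\sqrt{1+t^2}) = (q_2-t)/\sqrt{1+t^2}$, confirming the pattern.

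The main obstacle I expect is the sign and orientation bookkeeping: the formula $\cos\alpha_i = d/\rho$ is only correct with the appropriate signed distance, and one must confirm that the angle $\alpha_i$ as a \emph{dihedral} angle of the hyperbolic polyhedron (rather than its supplement) is the one computed, and that the relevant quantity $q_i - t$ has the correct sign so that $\cos\alpha_i$ lands in the expected range $(-1,1]$. Getting this right requires using the constraint that $C_i$ is orthogonal to the line $P_{i-1}P_i$ (which forces $R_i$ onto that line and fixes which arc of $C_i$ bounds the face $F_i$), together with the fact that $Q_i$ lies on segment $OS_i$, to select the correct geometric configuration. Once the configuration is fixed, the computation is the elementary distance-over-radius calculation above.
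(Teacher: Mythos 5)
Your proof is correct and follows essentially the same route as the paper's: the paper likewise computes the radius of $C_i$ as $p_i\sqrt{1+t^2}$ and takes its ratio with the signed length $P_iR_i = p_{i+1}-tp_i$, which is exactly your signed perpendicular distance from $R_i$ to the line $P_iP_{i+1}$ (these agree because $R_i$ lies on $P_{i-1}P_i$, which is perpendicular to $P_iP_{i+1}$ at $P_i$). The only slip is cosmetic: $Q_i$ is the intersection of $OS_i$ with $P_iP_{i+1}$, not a point of $C_i\cap P_iP_{i+1}$, but since your distance-over-radius formula does not depend on which intersection point is used, nothing in the argument is affected.
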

\begin{proof}
The radius of the circle $C_{i}$ is equal to $p_{i}\sqrt{1+t^{2}}$. 
The signed length $P_{i}R_{i}$ is equal to $p_{i+1}-tp_{i}$ 
(it is positive if $R_{i}$ is contained inside of the segment $P_{i-1}P_{i}$). 
Then $\cos \alpha_{i}$ is given by their ratio. 
\end{proof}

\begin{lem}
\label{lem:slope}
The condition that the segments $OS_{i}$ and $P_{i}P_{i+1}$ intersect 
and \\ $Q_{i} \neq P_{i+1}$ 
is equivalent to the following inequalities: 
\[
t \geq \frac{1}{2}(q_{i} - q_{i}^{-1}), \quad 
(1-q_{i}q_{i+1}) t < q_{i} + q_{i+1}. 
\]
\end{lem}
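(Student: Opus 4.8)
The plan is to work entirely in the projection coordinates fixed just before Lemma~\ref{lem:angle}, and to treat only the case $i=1$; the general case follows by the cyclic symmetry of the configuration, since a $90^\circ$ rotation about $O$ cyclically permutes the $P_j$ and $R_j$ and shifts the parameters $q_j$ accordingly. The two geometric requirements — that the segments $OS_1$ and $P_1P_2$ meet, and that their intersection $Q_1$ is not the vertex $P_2$ — will be converted into inequalities by writing down $Q_1$ and $S_1$ explicitly and then comparing coordinates.

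First I would determine the line through $O$ and $S_1$. Because $C_1$ and $C_2$ both pass through $O$, their equations carry no constant term, namely $x^2+y^2-2p_1x-2tp_1y=0$ and $x^2+y^2+2tp_2x-2p_2y=0$, so subtracting them exhibits the radical axis $OS_1$ as $(p_1+tp_2)x+(tp_1-p_2)y=0$. Intersecting this line with $P_1P_2$, which is the line $y=p_2$, yields
\[
x_{Q_1}=\frac{p_2(p_2-tp_1)}{p_1+tp_2}.
\]
Since $P_1=(p_1,p_2)$ and $P_2=(-p_3,p_2)$, the point $Q_1$ lies on the segment $P_1P_2$ exactly when $-p_3\le x_{Q_1}\le p_1$, and $Q_1\neq P_2$ means $x_{Q_1}\neq-p_3$. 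I would first record that $x_{Q_1}\le p_1$, after clearing the denominator, is equivalent to $t\ge\frac12(q_1-q_1^{-1})$, which by Lemma~\ref{lem:angle} is just the statement $\cos\alpha_1\le1$; a short computation also shows that this same inequality forces $p_1+tp_2>0$, so the denominator is positive and no inequality reverses. Granting that, the remaining bound $x_{Q_1}>-p_3$ rearranges directly into $(1-q_1q_2)t<q_1+q_2$, the second inequality.

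It then remains to confirm that $Q_1$ actually lies on the segment $OS_1$ and not merely on the line through $O$ and $S_1$. For this I would compute $S_1$ as the second intersection of the two circles, obtaining $y_{S_1}=2p_1p_2(p_1+tp_2)/(p_1^2+p_2^2)$; since $O$, $Q_1$, $S_1$ are collinear through the origin, membership $Q_1\in OS_1$ is equivalent to $0<p_2\le y_{S_1}$, and this reduces once more to $t\ge\frac12(q_1-q_1^{-1})$. Thus the first inequality does double duty, simultaneously guaranteeing that $Q_1$ does not run past $P_1$ and that it lies between $O$ and $S_1$, while the second inequality encodes precisely that $Q_1$ stays short of $P_2$ and is distinct from it.

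The step I expect to be the main obstacle is the sign bookkeeping. The slope of $OS_1$ and the factor $p_2-tp_1$ both change sign according to whether $\alpha_1$ is acute or obtuse, so the argument must verify that the first inequality already pins down $p_1+tp_2>0$ \emph{before} any denominator is cleared, and must separately dispose of the degenerate configurations at the extremes: $\alpha_1=0$, where $S_1=Q_1=P_1$ and the edge collapses to an ideal vertex, and $\alpha_1=\pi/2$, where $OS_1$ is vertical and must be parametrized by the $y$-coordinate rather than by slope. Once these cases are handled, the equivalence of the geometric condition with the two displayed inequalities follows from the coordinate comparisons above.
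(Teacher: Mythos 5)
Your proof is correct and follows essentially the same route as the paper: compute $S_1$ and $Q_1$ in coordinates, reduce the segment conditions to $y_{S_1}\geq p_2$ and $-p_3 < x_{Q_1} \leq p_1$, and use the key observation that $t\geq\frac{1}{2}(q_1-q_1^{-1})$ forces $p_1+tp_2>0$ so denominators can be cleared safely. The only cosmetic difference is that you obtain the line $OS_1$ as the radical axis of the circles $C_1$ and $C_2$, whereas the paper computes the coordinates of $S_1$ directly from $O$, $R_1$, $R_2$.
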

\begin{proof}
We prove it for $i=1$. 
By calculating the coordinates of $S_{1}$ from the ones of $O,R_{1}$, and $R_{2}$, 
we have 
\[
S_{1} = \left( \frac{2p_{1}p_{2}(p_{2}-tp_{1})}{p_{1}^{2}+p_{2}^{2}}, 
\frac{2p_{1}p_{2}(tp_{2}+p_{1})}{p_{1}^{2}+p_{2}^{2}} \right). 
\]
The slope of $OS_{1}$ is equal to 
$\dfrac{tp_{2}+p_{1}}{p_{2}-tp_{1}}$. 
Since $Q_{i} = OS_{1} \cap P_{1}P_{2}$, 
we have 
\[
Q_{1} = \left( \dfrac{p_{2}-tp_{1}}{tp_{2}+p_{1}}p_{2}, p_{2} \right).
\] 
Therefore the condition holds if and only if 
\[
\frac{2p_{1}p_{2}(tp_{2}+p_{1})}{p_{1}^{2}+p_{2}^{2}} \geq p_{2}, \quad 
-p_{3} < \frac{p_{2}-tp_{1}}{tp_{2}+p_{1}}p_{2} \leq p_{1}. 
\]
The first inequality is equivalent to 
\[
t \geq \frac{1}{2}(q_{1} - q_{1}^{-1}). 
\]
The first inequality implies that 
$tp_{2}+p_{1} > 0$. 
Under this condition, 
the right of second inequality is also equivalent to 
\[
t \geq \frac{1}{2}(q_{1} - q_{1}^{-1}), 
\]
and the left of second inequality is equivalent to 
\[
(1-q_{1}q_{2}) t < q_{1} + q_{2}. 
\]
\end{proof}

\begin{rem}
\label{rem:slope}
Suppose that the above condition holds for all $i$. 
Since $\prod_{i=1}^{4} q_{i} = 1$, 
there is $i$ such that $q_{i} \geq 1$. 
Hence $t \geq 0$. 
Thus the second inequality is vacuous if $p_{i} \leq p_{i+2}$. 
\end{rem}

\begin{lem}
\label{lem:parameter}
Let 
\[
\mathcal{B} = 
\{(q_{1}, \dots , q_{4},t) \in \mathbb{R}_{>0}^{4} \times \mathbb{R}_{\geq 0} 
\mid \prod_{i=1}^{4} q_{i} = 1, \
t \geq \frac{1}{2}(q_{i} - q_{i}^{-1})\}.  
\]
Define $f \colon \mathcal{B} \to \mathbb{R}^{4}$ by 
\[
f(q_{1}, \dots , q_{4},t) = 
\left( \frac{q_{1}-t}{\sqrt{1+t^{2}}}, \dots , \frac{q_{4}-t}{\sqrt{1+t^{2}}} \right). 
\]
Then $f$ is injective, and the image of $f$ is $(-1,1]^{4}$. 
\end{lem}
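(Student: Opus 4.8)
The plan is to invert the map $f$ explicitly. Writing $c_i$ for the $i$-th coordinate of $f(q_1,\dots,q_4,t)$, the relation $c_i = (q_i - t)/\sqrt{1+t^2}$ is equivalent to
\[
q_i = t + c_i\sqrt{1+t^2}.
\]
Thus once $t$ is fixed, each $q_i$ is recovered from $c_i$, and the only remaining freedom is $t$ itself, which is pinned down by the normalization $\prod_i q_i = 1$. So the whole lemma reduces to analyzing, for a prescribed target $(c_1,\dots,c_4)$, the single-variable equation
\[
g(t) := \prod_{i=1}^{4}\bigl(t + c_i\sqrt{1+t^2}\bigr) = 1, \qquad t \ge 0,
\]
under the constraints $q_i > 0$ for all $i$ that enter the definition of $\mathcal{B}$.

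First I would record that the defining inequality $t \ge \tfrac12(q_i - q_i^{-1})$ of $\mathcal{B}$, after multiplying by $q_i > 0$ to get $q_i^2 - 2tq_i - 1 \le 0$, is exactly the condition $q_i \le t + \sqrt{1+t^2}$, which in the $c_i$ variables reads $c_i \le 1$. Likewise $q_i > 0$ together with $t \ge 0$ forces $c_i > -t/\sqrt{1+t^2} > -1$. This shows at once that $f(\mathcal{B}) \subseteq (-1,1]^4$, giving one inclusion for the image.

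The core is to prove that for every $(c_1,\dots,c_4) \in (-1,1]^4$ the equation $g(t)=1$ has a unique admissible solution. I would factor $g = \prod_i h_i$ with $h_i(t) = t + c_i\sqrt{1+t^2}$ and compute $h_i'(t) = 1 + c_i\, t/\sqrt{1+t^2}$. Since $t/\sqrt{1+t^2} \in [0,1)$ and $c_i > -1$, each $h_i$ is strictly increasing on $t \ge 0$. Let $t_0 \ge 0$ be the smallest value beyond which every $h_i$ is positive, explicitly $t_0 = \max\bigl(\{0\} \cup \{-c_i/\sqrt{1-c_i^2} : c_i < 0\}\bigr)$, the factor with $c_i < 0$ vanishing at $-c_i/\sqrt{1-c_i^2}$. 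On $(t_0,\infty)$ all factors are positive and increasing, so $g$ is strictly increasing there; moreover $h_i(t) \sim (1+c_i)t \to +\infty$ gives $g(t) \to +\infty$, while $g(t_0) = 0$ when $t_0 > 0$ and $g(0) = \prod_i c_i \le 1$ when $t_0 = 0$ (the latter forcing all $c_i \ge 0$). The intermediate value theorem together with strict monotonicity then yields a unique root $t^* \ge t_0$ of $g(t) = 1$; the only borderline case is $\prod_i c_i = 1$, which forces all $c_i = 1$ and hence $t^* = 0$, $q_i = 1$. Setting $q_i = h_i(t^*) > 0$ produces the unique preimage in $\mathcal{B}$, giving both surjectivity onto $(-1,1]^4$ and injectivity of $f$ simultaneously.

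I expect the one delicate point to be the bookkeeping of signs and the endpoint $t_0$: the factors $h_i$ can individually be negative for small $t$, so the monotonicity of $g$ is clear only on the subinterval $(t_0,\infty)$ where all $q_i$ are genuinely positive, which is precisely the locus allowed by $\mathcal{B}$. Once the problem is confined to that interval, the product of positive increasing functions is increasing and the remainder is routine; verifying $h_i' > 0$ and the asymptotics $h_i \sim (1+c_i)t$ are short computations that I would not belabor.
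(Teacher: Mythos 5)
Your proposal is correct and follows essentially the same route as the paper: invert the map via $q_i = t + c_i\sqrt{1+t^2}$, reduce everything to the single-variable equation $\prod_i (t + c_i\sqrt{1+t^2}) = 1$, and use strict monotonicity of the product on the locus where all factors are positive to get existence and uniqueness of the admissible root. Your algebraic translation of the constraint $t \ge \tfrac12(q_i - q_i^{-1})$ into $c_i \le 1$ is a slightly more direct bookkeeping than the paper's appeal to monotonicity of $f_i$, but the argument is the same.
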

\begin{proof}
For fixed $q_{i}$, the function $f_{i}(t) = \dfrac{q_{i}-t}{\sqrt{1+t^{2}}}$ 
is monotonically decreasing. 
Since $f_{i}\left(\frac{1}{2}(q_{i} - q_{i}^{-1})\right) = 1$ 
and $\lim_{t \to \infty} f_{i}(t) = -1$, 
the image of $f$ is contained in $(-1,1]^{4}$. 
Take any $(c_{1}, \dots , c_{4}) \in (-1,1]^{4}$. 
It is sufficient to show that  
there is a unique element $(q_{1}, \dots , q_{4},t) \in \mathcal{B}$ 
such that $f(q_{1}, \dots , q_{4},t) = (c_{1}, \dots , c_{4})$.

Let $g_{i}(t) = t + c_{i}\sqrt{1+t^{2}}$. 
Since $-1 < c_{i} \leq 1$, the function $g_{i}(t)$ is monotonically increasing. 
Note that $\lim_{t \to \infty} g_{i}(t) = \infty$. 
Let $g(t) = \prod_{i=1}^{4}g_{i}(t)$. 
If some $c_{i}$ is negative, 
then we take $t^{\prime} >0$ to be the maximum of $t$ 
satisfying $g_{i}(t) = 0$ for some $i$. 
Then 
$g(t^{\prime}) = 0$ and $g(t)$ is monotonically increasing for $t \geq t^{\prime}$. 
If no $c_{i}$ is negative, 
then $g(0) = \prod_{i=1}^{4}c_{i} \leq 1$ 
and $g(t)$ is monotonically increasing for $t \geq 0$. 
In both cases, 
there is a unique $t_{0} \geq 0$ 
such that $g(t_{0}) = 1$ and $g_{i}(t_{0}) > 0$. 
By setting $q_{i} = g_{i}(t_{0})$, 
we have $\prod_{i=1}^{4} q_{i} = 1$ 
and $f(q_{1}, \dots , q_{4}, t_{0}) = (c_{1}, \dots , c_{4})$. 
Since $f_{i}(t_{0}) = c_{i} \leq 1 = f_{i}\left(\frac{1}{2}(q_{i} - q_{i}^{-1})\right)$ 
and $f_{i}$ is monotonically decreasing, 
we have $t_{0} \geq \frac{1}{2}(q_{i} - q_{i}^{-1})$. 
Thus we have a unique solution. 
\end{proof}

Let $\mathcal{B}_{0} = \{(q_{1}, \dots , q_{4},t) \in \mathcal{B} 
\mid (1-q_{i}q_{i+1}) t < q_{i} + q_{i+1} \}$. 
Lemma~\ref{lem:slope} implies that 
an element of $\mathcal{B}_{0}$ corresponds to a hyperbolic trapezohedron 
with right dihedral angles except at $\hat{L}_{i}$. 
Therefore $\mathcal{A}^{\prime} = f (\mathcal{B}_{0})$ 
by Lemma~\ref{lem:angle}. 
Now $f \colon \mathcal{B} \to (-1,1]^{4}$ is a homeomorphism. 
Since $\mathcal{B}_{0} \neq \mathcal{B}$, 
we have $\mathcal{A}^{\prime} \neq (-1,1]^{4}$.

\begin{proof}[Proof of Theorem~\ref{thm:rigidity}]
The map $f \colon \mathcal{B}_{0} \to \mathcal{A}^{\prime}$ 
is injective by Lemma~\ref{lem:parameter}. 
Hence the isometry class of a trapezohedron $T$ is determined by the dihedral angles. 
\end{proof}

\begin{proof}[Proof of Theorem~\ref{thm:bound}]
Let $\partial \mathcal{B}_{0}$ be the frontier of $\mathcal{B}_{0}$ in $\mathcal{B}$. 
Then $f (\partial \mathcal{B}_{0}) = \partial \mathcal{A}^{\prime}$, 
and $f (1, \dots , 1, 0) = (1, \dots , 1)$. 
Let us describe $\partial \mathcal{A}^{\prime}$. 
Define 
\[
\partial_{i} \mathcal{A}^{\prime} = 
\{(c_{1}, \dots , c_{4}) \in \partial \mathcal{A}^{\prime} \mid 
c_{i} \leq c_{i+2}, \ c_{i+1} \leq c_{i+3} \}. 
\] 
Then $\partial \mathcal{A}^{\prime} = \bigcup \partial_{i} \mathcal{A}^{\prime}$.

We consider $\partial_{1} \mathcal{A}^{\prime}$. 
Recall that $c_{i} = \cos \alpha_{i} = \dfrac{q_{i}-t}{\sqrt{1+t^{2}}}$. 
Since $c_{1} \leq c_{3}$ and $c_{2} \leq c_{4}$,  
we have $q_{1} \leq q_{3}$ and $q_{2} \leq q_{4}$. 
Since $\prod_{i=1}^{4} q_{i} = 1$, 
we have $q_{1}q_{2} \leq 1 \leq q_{3}q_{4}$. 
If $q_{4}q_{1} < 1$, 
then $\dfrac{q_{1}+q_{2}}{1-q_{1}q_{2}} \leq \dfrac{q_{4}+q_{1}}{1-q_{4}q_{1}}$. 
If $q_{2}q_{3} < 1$, 
then $\dfrac{q_{1}+q_{2}}{1-q_{1}q_{2}} \leq \dfrac{q_{2}+q_{3}}{1-q_{2}q_{3}}$. 
Hence it is sufficient to consider only the condition that 
$t < \dfrac{q_{1}+q_{2}}{1-q_{1}q_{2}}$. 
Substitute 
$q_{i} = t + c_{i} \sqrt{1+t^{2}}$
in $(1-q_{1}q_{2})t < q_{1} + q_{2}$. 
Then it is equivalent to 
\[
(c_{1}c_{2}+1)t > -(c_{1}+c_{2})\sqrt{1+t^{2}}. 
\]
If $c_{1}+c_{2} > 0$, the inequality holds trivially. 
Suppose that $c_{1}+c_{2} \leq 0$. 
Note that $c_{1}, c_{2} \neq 1$. 
Then (by taking squares of both sides) it is equivalent to 
\[
t > \frac{-(c_{1}+c_{2})}{\sqrt{(1-c_{1}^{2})(1-c_{2}^{2})}}, 
\]
which is also equivalent to 
\[
\sqrt{1+t^{2}} > \frac{c_{1}c_{2}+1}{\sqrt{(1-c_{1}^{2})(1-c_{2}^{2})}}. 
\]
Then they are also equivalent to each of 
\begin{align*}
q_{1}q_{2} & = t \left( (c_{1}c_{2}+1)t + (c_{1}+c_{2})\sqrt{1+t^{2}} \right) 
+ c_{1}c_{2} 
> c_{1}c_{2}, \\
q_{3}q_{4} & = t^{2} + (c_{3}+c_{4})t\sqrt{1+t^{2}} + c_{3}c_{4}(1+t^{2}) \\
& > \frac{(c_{1}+c_{2})^{2} - (c_{1}+c_{2})(c_{1}c_{2}+1)(c_{3}+c_{4}) 
+ (c_{1}c_{2}+1)^{2}c_{3}c_{4}}{(1-c_{1}^{2})(1-c_{2}^{2})}. 
\end{align*}
Since $\prod_{i=1}^{4} q_{i} = 1$, 
we have 
\[
c_{1}c_{2}\left((c_{1}+c_{2})^{2} - (c_{1}+c_{2})(c_{1}c_{2}+1)(c_{3}+c_{4}) 
+ (c_{1}c_{2}+1)^{2}c_{3}c_{4}\right) 
< (1-c_{1}^{2})(1-c_{2}^{2}). 
\]
Since $c_{1}c_{2}(c_{1}+c_{2})^{2} - (1-c_{1}^{2})(1-c_{2}^{2}) = 
(c_{1}c_{2}+1)\left((c_{1}+c_{2})^{2} - c_{1}c_{2} - 1 \right)$ and $c_{1}c_{2}+1 > 0$, 
we have 
\begin{align*}
\Phi_{1}(c_{1}, \dots , c_{4}) 
& = c_{1}c_{2}(c_{1}c_{2}+1)c_{3}c_{4} 
- c_{1}c_{2}(c_{1}+c_{2})(c_{3}+c_{4}) \\
& \quad + (c_{1}+c_{2})^{2} - c_{1}c_{2} -1 \\
& < 0. 
\end{align*}

After all, 
$t < \dfrac{q_{1}+q_{2}}{1-q_{1}q_{2}}$ is equivalent to 
$c_{1}+c_{2} > 0$ or $\Phi_{1}(c_{1}, \dots , c_{4}) < 0$.  
Under the condition that $c_{1} \leq c_{3}$ and $c_{2} \leq c_{4}$, 
the frontier of $\mathcal{A}^{\prime}$ is given by 
$\Phi_{1} = 0$ and $c_{1}+c_{2} \leq 0$. 
In this case $Q_{i} = P_{i+1}$, 
which means that the edge between $\hat{L}_{i}$ and $\hat{L}_{i+1}$ degenerates. 
Therefore $\mathcal{A}^{\prime}$ and $\partial_{i} \mathcal{A}^{\prime}$ 
are described as in the assertion. 
\end{proof}

Let us see the shape of $\mathcal{A}$ 
more explicitly. 

\begin{cor}
\label{cor:abab}
For any $\alpha, \beta \in [0,\pi)$, 
it holds that $(\alpha, \beta, \alpha, \beta) \in \mathcal{A}$. 
Consequently, 
$\partial_{i} \mathcal{A}^{\prime} \cap \partial_{i+2} \mathcal{A}^{\prime} = \emptyset$. 
\end{cor}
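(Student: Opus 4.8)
The plan is to evaluate the defining condition of Theorem~\ref{thm:bound} at the symmetric quadruple and exploit a clean factorization. Write $a = \cos\alpha$ and $b = \cos\beta$, so that $a, b \in (-1,1]$, and consider the point $(a,b,a,b) \in (-1,1]^4$. Because the indices are cyclic and $c_1 = c_3 = a$, $c_2 = c_4 = b$, all four sums $c_i + c_{i+1}$ equal $a+b$, and all four functions $\Phi_i$ take the same value; it therefore suffices to analyze $\Phi_1(a,b,a,b)$ together with the sign of $a+b$.

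First I would compute $\Phi_1(a,b,a,b)$. Setting $s = a+b$ and $p = ab$, a direct expansion collapses the expression to
\[
\Phi_1(a,b,a,b) = (p-1)\bigl((p+1)^2 - s^2\bigr),
\]
and since $(p+1)^2 - s^2 = (1 - s + p)(1 + s + p) = (1-a)(1-b)(1+a)(1+b)$, this factors as
\[
\Phi_1(a,b,a,b) = (ab - 1)(1-a^2)(1-b^2).
\]
On $(-1,1]^2$ the factor $ab - 1$ is $\le 0$ (vanishing only at $a=b=1$), while $1-a^2$ and $1-b^2$ are $\ge 0$ (vanishing exactly when $a=1$, resp.\ $b=1$). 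Hence $\Phi_1(a,b,a,b) \le 0$, with equality precisely when $a = 1$ or $b = 1$. If $a<1$ and $b<1$, then $\Phi_1 < 0$; if instead $a=1$ or $b=1$, then $a+b > 0$ because the remaining coordinate exceeds $-1$. In either case the membership criterion ``$\Phi_i < 0$ or $c_i + c_{i+1} > 0$'' of Theorem~\ref{thm:bound} holds for every $i$, so $(a,b,a,b) \in \mathcal{A}'$ and thus $(\alpha,\beta,\alpha,\beta) \in \mathcal{A}$.

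For the consequence I would argue by contradiction. Suppose $(c_1, \dots, c_4)$ lies in $\partial_1 \mathcal{A}' \cap \partial_3 \mathcal{A}'$. The ordering conditions defining these sets give $c_1 \le c_3$ and $c_2 \le c_4$ from $\partial_1 \mathcal{A}'$, and $c_3 \le c_1$ and $c_4 \le c_2$ from $\partial_3 \mathcal{A}'$; together they force $c_1 = c_3 =: a$ and $c_2 = c_4 =: b$. Membership in $\partial_1 \mathcal{A}'$ also requires $\Phi_1 = 0$ and $c_1 + c_2 = a+b \le 0$. But by the factorization above, $\Phi_1(a,b,a,b) = 0$ forces $a=1$ or $b=1$, whence $a+b > 0$, contradicting $a + b \le 0$. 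Therefore $\partial_1 \mathcal{A}' \cap \partial_3 \mathcal{A}' = \emptyset$, and the identical argument with indices shifted by one gives $\partial_2 \mathcal{A}' \cap \partial_4 \mathcal{A}' = \emptyset$.

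The only genuine obstacle is the polynomial identity for $\Phi_1(a,b,a,b)$: it is a degree-three-in-each-variable expression, but the symmetric-function substitution $s=a+b$, $p=ab$ collapses it to $(p-1)\bigl((p+1)^2 - s^2\bigr)$, after which the factorization into $(ab-1)(1-a^2)(1-b^2)$ and the ensuing sign analysis are immediate. Once that identity is in hand, both assertions of the corollary follow with no further computation.
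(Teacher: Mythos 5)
Your proposal is correct and follows essentially the same route as the paper: the paper's proof rests on the identical factorization $\Phi_{1}(c_{1},c_{2},c_{1},c_{2}) = -(1-c_{1}c_{2})(1-c_{1}^{2})(1-c_{2}^{2})$ and the same sign analysis, with the $c_{i}=1$ case absorbed by the condition $c_{i}+c_{i+1}>0$. The only cosmetic difference is in the consequence: you derive the contradiction directly from the description of $\partial_{i}\mathcal{A}^{\prime}$ (namely $\Phi_{i}=0$ and $c_{i}+c_{i+1}\leq 0$), whereas the paper implicitly uses that the symmetric points lie in the open set $\mathcal{A}^{\prime}$ and hence cannot lie on its frontier; both are valid.
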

\begin{proof}
If $(c_{1}, \dots , c_{4}) \in 
\partial_{i} \mathcal{A}^{\prime} \cap \partial_{i+2} \mathcal{A}^{\prime}$, 
then $c_{1} = c_{3}$ and $c_{2} = c_{4}$. 
If $-1 < c_{1}, c_{2} < 1$, 
then $\Phi_{1}(c_{1},c_{2},c_{1},c_{2}) = -(1-c_{1}c_{2})(1-c_{1}^{2})(1-c_{2}^{2}) < 0$. 
By the same argument for all $\Phi_{i}$'s, 
we have $(c_{1},c_{2},c_{1},c_{2}) \in \mathcal{A}^{\prime}$ 
for any $c_{1}, c_{2} \in (-1,1]$. 
\end{proof}

\begin{cor}
\label{cor:double}
It holds that 
$\partial_{i} \mathcal{A}^{\prime} \cap \partial_{i+1} \mathcal{A}^{\prime} \neq \emptyset$. 
In other words, there is a degeneration in $\mathcal{C}$ 
in which two intersections of cone loci occur. 
\end{cor}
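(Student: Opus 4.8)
The plan is to produce a single explicit point lying in both frontier pieces. By the cyclic symmetry of the indices it suffices to treat $i=1$, that is, to exhibit a point of $\partial_1 \mathcal{A}^{\prime} \cap \partial_2 \mathcal{A}^{\prime}$. First I would read off the defining inequalities from Theorem~\ref{thm:bound}: membership in $\partial_1 \mathcal{A}^{\prime}$ requires $c_1 \leq c_3$, while membership in $\partial_2 \mathcal{A}^{\prime}$ requires $c_3 \leq c_1$. Hence any common point must satisfy $c_1 = c_3$, and I would impose this from the outset, writing $c_1 = c_3 = a$.

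The key simplification comes next. The function $\Phi_i$ depends on its arguments only through the quantities $c_i c_{i+1}$, $c_{i+2}c_{i+3}$, $c_i + c_{i+1}$, and $c_{i+2}+c_{i+3}$. When $c_1 = c_3$ these four quantities agree for $i=1$ and $i=2$ (both give $c_1 c_2$, $c_1 c_4$, $c_1 + c_2$, $c_1 + c_4$), so $\Phi_1$ and $\Phi_2$ become the same polynomial. Thus the two equations $\Phi_1 = 0$ and $\Phi_2 = 0$ collapse to the single equation $\Phi_1(a, c_2, a, c_4) = 0$. This equation is linear in $c_4$; solving it, after factoring the numerator (which carries a factor $a^2 - 1$), yields
\[
c_4 = \frac{a c_2 + c_2^2 - 1}{a c_2^{2}}.
\]

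Then I would specialize to $c_2 = a$, i.e.\ take the point $(a,a,a,c_4)$ with $c_4 = (2a^2 - 1)/a^3$, and check the remaining constraints. The conditions $c_1 + c_2 \leq 0$ and $c_2 + c_3 \leq 0$ both read $2a \leq 0$, so I need $a < 0$. The condition $c_2 \leq c_4$ reduces, after clearing the negative denominator $a^3$, to $(a^2 - 1)^2 \geq 0$, which always holds. Finally $c_4 \in (-1,1]$ reduces to the two cubic inequalities $(a+1)(a^2 + a - 1) < 0$ and $(a-1)(a^2 - a - 1) \leq 0$; the first holds for all $a \in (-1,0)$, and the second holds precisely for $a \in (-1, \tfrac{1-\sqrt{5}}{2})$. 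Hence for every such $a$ the point $(a,a,a,(2a^2-1)/a^3)$ lies in $\partial_1 \mathcal{A}^{\prime} \cap \partial_2 \mathcal{A}^{\prime}$, proving the intersection is nonempty.

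The main obstacle, and also the one genuinely pleasant step, is the observation that imposing $c_1 = c_3$ forces $\Phi_1 \equiv \Phi_2$, which turns a system of two polynomial equations into one. After that the argument is bookkeeping: solving a linear equation for $c_4$ and confirming through the cubic factorizations that the feasible range of $a$ is nonempty. Geometrically, the point produced is exactly the configuration in which the edges $\hat{L}_1 \hat{L}_2$ and $\hat{L}_2 \hat{L}_3$ degenerate simultaneously, so that $L_2$ meets both $L_1$ and $L_3$; this realizes the two simultaneous intersections of cone loci claimed in Theorem~\ref{thm:main}.
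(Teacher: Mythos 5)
Your proof is correct, and it recovers (and slightly strengthens) the paper's argument. The paper's own proof is a one-line verification: it evaluates $\Phi_{1}(c,c,c,1) = \Phi_{2}(c,c,c,1) = (c-1)^{2}(c+1)(c^{2}-c-1)$ and takes $c = \frac{1-\sqrt{5}}{2}$, concluding that $\left(\frac{1-\sqrt{5}}{2}, \frac{1-\sqrt{5}}{2}, \frac{1-\sqrt{5}}{2}, 1\right) \in \partial_{1}\mathcal{A}^{\prime} \cap \partial_{2}\mathcal{A}^{\prime}$. You instead derive the ansatz rather than guess it: the observations that $c_{1}=c_{3}$ is forced by the frontier conditions and that this identification makes $\Phi_{1} \equiv \Phi_{2}$ are implicit in the paper's choice of a symmetric point, but you exploit them to solve $\Phi_{1}=0$ linearly in $c_{4}$ and obtain a whole one-parameter family $\bigl(a,a,a,(2a^{2}-1)/a^{3}\bigr)$ of intersection points, of which the paper's point is precisely the endpoint $a = \frac{1-\sqrt{5}}{2}$, where $(2a^{2}-1)/a^{3} = 1$. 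I verified your key computations: with $c_{1}=c_{3}=a$ one indeed has $\Phi_{1}(a,c_{2},a,c_{4}) = (a^{2}-1)\bigl(a c_{2}^{2} c_{4} - c_{2}^{2} - a c_{2} + 1\bigr)$, and the cubic factorizations $a^{3}+2a^{2}-1=(a+1)(a^{2}+a-1)$ and $a^{3}-2a^{2}+1=(a-1)(a^{2}-a-1)$ are right. The only slip is the final interval: the condition $(a-1)(a^{2}-a-1)\leq 0$ holds for $a \in \left(-1, \frac{1-\sqrt{5}}{2}\right]$, closed at the right endpoint, not open as you wrote; this does not affect the conclusion, since either interval is nonempty. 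In exchange for the extra bookkeeping (which the paper avoids by fixing $c_{4}=1$ from the start), your argument shows that $\partial_{1}\mathcal{A}^{\prime} \cap \partial_{2}\mathcal{A}^{\prime}$ contains a curve, not just a point.
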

\begin{proof}
Since $\Phi_{1}(c,c,c,1) = \Phi_{2}(c,c,c,1) = (c-1)^{2}(c+1)(c^{2}-c-1)$, 
we have 
\[
\left( \dfrac{1-\sqrt{5}}{2}, \dfrac{1-\sqrt{5}}{2}, \dfrac{1-\sqrt{5}}{2}, 1 \right) 
\in \partial_{1} \mathcal{A}^{\prime} \cap \partial_{2} \mathcal{A}^{\prime}. 
\]
\end{proof}

\begin{thm}
\label{thm:cube}
It holds that $[0,\arccos(1-\sqrt{2}))^{4} \subset \mathcal{A}$. 
Furthermore, \\ $(\arccos(1-\sqrt{2}),\arccos(1-\sqrt{2}),0,0) \notin \mathcal{A}$. 
Hence the value $\arccos(1-\sqrt{2})$ is best possible. 
\end{thm}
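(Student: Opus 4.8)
The plan is to work entirely in the $c$-coordinates and invoke the explicit description of $\mathcal{A}^{\prime}$ from Theorem~\ref{thm:bound}. Since $\cos$ is a decreasing homeomorphism $[0,\pi) \to (-1,1]$ carrying $\arccos(1-\sqrt{2})$ to $1-\sqrt{2}$, setting $s = 1-\sqrt{2}$ the claim $[0,\arccos(1-\sqrt{2}))^{4} \subset \mathcal{A}$ is equivalent to $(s,1]^{4} \subset \mathcal{A}^{\prime}$. I record the identity $s^{2} = 2s+1$ (equivalently $(1-s)^{2} = 2$), which will pin down the threshold. By Theorem~\ref{thm:bound}, a point lies in $\mathcal{A}^{\prime}$ if and only if for each $i$ either $c_{i}+c_{i+1} > 0$ or $\Phi_{i} < 0$; by the cyclic symmetry of the $\Phi_{i}$ it suffices to treat one index, say $i=1$, and I may assume $c_{1}+c_{2} \leq 0$ and prove $\Phi_{1} < 0$.

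First I would observe that $c_{1}+c_{2} \leq 0$ together with $c_{1}, c_{2} > s$ forces $c_{1}, c_{2} \in (s, -s) = (s, \sqrt{2}-1)$, in particular $c_{1}, c_{2} < 1$. Next I would exploit that $\Phi_{1}$ is affine in each of $c_{3}, c_{4}$ separately, so that on the square $c_{3}, c_{4} \in [s,1]$ its maximum is attained at one of the three vertices $(1,1)$, $(1,s)$, $(s,s)$. Thus it is enough to show that each of these three vertex values of $\Phi_{1}$ is negative, after which $\Phi_{1}(c_{1},c_{2},c_{3},c_{4}) \leq \max_{\text{vertices}} \Phi_{1} < 0$ for the actual $c_{3}, c_{4} \in (s,1]$.

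The heart of the argument is the vertex $(1,1)$, where a direct simplification gives
\[
\Phi_{1}(c_{1},c_{2},1,1) = (c_{1}c_{2}-c_{1}-c_{2})^{2} - 1 = \left( (1-c_{1})(1-c_{2}) - 1 \right)^{2} - 1.
\]
This is negative precisely when $(1-c_{1})(1-c_{2}) \in (0,2)$. The upper bound $(1-c_{1})(1-c_{2}) < (1-s)^{2} = 2$ uses exactly $c_{1}, c_{2} > s$ and is sharp, with equality only as $c_{1}, c_{2} \to s$; the lower bound $(1-c_{1})(1-c_{2}) > 0$ uses $c_{1}, c_{2} < 1$, which is where the hypothesis $c_{1}+c_{2} \leq 0$ is really needed (if $c_{1} = 1$ the vertex value would be $0$, but then $c_{1}+c_{2} > 0$). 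This is the binding vertex, and it is what forces the optimal constant $1-\sqrt{2}$. For the remaining vertices I would substitute $s^{2} = 2s+1$ to obtain explicit expressions, for instance
\[
\Phi_{1}(c_{1},c_{2},s,s) = \left( s c_{1}c_{2} + 1 - c_{1} - c_{2} \right)^{2} + 2(c_{1}+c_{2}-1),
\]
which is strictly negative because $c_{1}+c_{2} \leq 0$ keeps it bounded away from $0$; the vertex $(1,s)$ is handled the same way. The main obstacle is precisely controlling these two non-binding vertices: naive independent bounds on $c_{1}c_{2}$ and on $c_{1}+c_{2}$ are almost too weak, so I would use the fact that the product and the sum cannot both be extremal under $c_{1}+c_{2}\leq 0$ to secure the strict inequality.

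Finally, for sharpness I would test the single point $(\alpha_{1},\dots,\alpha_{4}) = (\arccos s, \arccos s, 0, 0)$, that is $(c_{1},\dots,c_{4}) = (s,s,1,1)$. Using $s^{2} = 2s+1$ one computes $\Phi_{1}(s,s,1,1) = s^{4} - 4s^{3} + 4s^{2} - 1 = 0$ (equivalently $((1-s)^{2}-1)^{2}-1 = 0$), while $c_{1}+c_{2} = 2s \leq 0$ and $c_{1} = c_{2} = s \leq 1 = c_{3} = c_{4}$, so this point satisfies all the defining conditions of $\partial_{1}\mathcal{A}^{\prime}$ in Theorem~\ref{thm:bound}. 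Hence it lies in $\partial\mathcal{A}^{\prime}$ and therefore not in $\mathcal{A}^{\prime}$, giving $(\arccos(1-\sqrt{2}), \arccos(1-\sqrt{2}), 0, 0) \notin \mathcal{A}$. Since this point belongs to $[0,a)^{4}$ for every $a > \arccos(1-\sqrt{2})$, no larger cube is contained in $\mathcal{A}$, so the value $\arccos(1-\sqrt{2})$ is best possible.
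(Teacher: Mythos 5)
Your skeleton is sound, and where it is complete it is correct and genuinely different from the paper's argument. Writing $s=1-\sqrt{2}$ as you do: the paper, after the same reduction to the criterion of Theorem~\ref{thm:bound} (but under an extra ordering hypothesis $c_{1}\leq c_{3}$, $c_{2}\leq c_{4}$ that your cyclic-symmetry reduction avoids), fixes $c_{1},c_{2}$ and studies the hyperbola $\Phi_{1}=0$ in the $(c_{3},c_{4})$-plane, splitting into the cases $c_{1}c_{2}<0$ and $c_{1}c_{2}>0$ and invoking Corollary~\ref{cor:abab} for one corner of the relevant rectangle. Your observation that $\Phi_{1}$ is affine in each of $c_{3},c_{4}$, so its maximum over $[s,1]^{2}$ sits at a corner, replaces all of that geometry and is cleaner and stronger (no ordering assumption needed). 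Your treatment of the binding corner, $\Phi_{1}(c_{1},c_{2},1,1)=((1-c_{1})(1-c_{2})-1)^{2}-1<0$, coincides with the paper's key computation, and your sharpness argument is fine: $(s,s,1,1)$ satisfies $\Phi_{1}=0$ and $c_{1}+c_{2}\leq 0$, so it fails the membership criterion at $i=1$ (equivalently, lies in $\partial_{1}\mathcal{A}^{\prime}$, which is disjoint from the open set $\mathcal{A}^{\prime}$).

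The gap is that the other two corners are asserted, not proved, and they are not routine. At $(c_{3},c_{4})=(s,s)$ your identity $\Phi_{1}(c_{1},c_{2},s,s)=(sc_{1}c_{2}+1-c_{1}-c_{2})^{2}+2(c_{1}+c_{2}-1)$ is correct, but ``negative because $c_{1}+c_{2}\leq 0$'' is not an argument: one must show the square is smaller than $2(1-c_{1}-c_{2})$, and $1-c_{1}-c_{2}$ can be as large as $1-2s=2\sqrt{2}-1$. With only the independent bounds $|c_{1}c_{2}|<s^{2}$ and $c_{1}+c_{2}>2s$, the worst case is $(|s|^{3}+1-2s)^{2}-2(1-2s)=(7\sqrt{2}-8)^{2}-(4\sqrt{2}-2)=164-116\sqrt{2}$, which is negative only because $41^{2}<2\cdot 29^{2}$ --- a margin of roughly $0.05$. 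So your suspicion that naive bounds are ``almost too weak'' is exactly right (they do suffice, but only this computation shows it; alternatively the coupling $c_{1}c_{2}>s(c_{1}+c_{2})-s^{2}$, i.e.\ $(c_{1}-s)(c_{2}-s)>0$, gives more room), and none of this is in your proposal. Moreover $(c_{3},c_{4})=(1,s)$ is not ``handled the same way'': the clean route there is the factorization $\Phi_{1}(c_{1},c_{2},1,s)=-(1-c_{1})(1-c_{2})\left(c_{1}+c_{2}+1-sc_{1}c_{2}\right)$, whose last factor is positive since $c_{1}+c_{2}+1>2s+1=s^{2}$ while $sc_{1}c_{2}<|s|^{3}<s^{2}$ (using $|c_{1}|,|c_{2}|<|s|$); you supply neither this nor a substitute. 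Both missing facts are true, so your proof closes up along exactly these lines, but as written the two non-binding corners --- the technical heart of your reduction --- remain unproven.
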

\begin{proof}
Since $\Phi_{1}(c,c,1,1) = (c-1)^{2}(c^{2}-2c-1)$, 
we have $(1-\sqrt{2},1-\sqrt{2},1,1) \in \partial_{1} \mathcal{A}^{\prime}$. 
Hence $(\arccos(1-\sqrt{2}),\arccos(1-\sqrt{2}),0,0) \notin \mathcal{A}$.

Without loss of generality, 
it is sufficient to show that 
$\Phi_{1}(c_{1}, \dots , c_{4}) < 0$ or $c_{1}+c_{2} > 0$ 
if $1-\sqrt{2} < c_{1} \leq c_{3} \leq 1$ and $1-\sqrt{2} < c_{2} \leq c_{4} \leq 1$. 
Suppose that $1-\sqrt{2} < c_{1} \leq c_{3} \leq 1, 1-\sqrt{2} < c_{2} \leq c_{4} \leq 1$, 
and $c_{1}+c_{2} \leq 0$. 
Note that $c_{1}, c_{2} \neq 1$. 
If $c_{1} = 0$, then $\Phi_{1}(c_{1}, \dots , c_{4}) = c_{2}^{2} - 1 < 0$. 
The same argument holds if $c_{2} = 0$. 
Hence we may assume that $c_{1}c_{2} \neq 0$.

For fixed $c_{1}$ and $c_{2}$, the equation 
\begin{align*}
\Phi_{1}(c_{1}, \dots , c_{4}) 
& = c_{1}c_{2}(c_{1}c_{2}+1) 
\left( c_{3} - \frac{c_{1}+c_{2}}{c_{1}c_{2}+1} \right) 
\left( c_{4} - \frac{c_{1}+c_{2}}{c_{1}c_{2}+1} \right) \\ 
& \quad - \frac{(1-c_{1}^{2})(1-c_{2}^{2})}{c_{1}c_{2}+1} \\
& = 0
\end{align*}
gives a hyperbola $H$ in the $(c_{3},c_{4})$-plane. 
The asymptotic lines of $H$ are given by 
$c_{3} = \dfrac{c_{1}+c_{2}}{c_{1}c_{2}+1}$ and 
$c_{4} = \dfrac{c_{1}+c_{2}}{c_{1}c_{2}+1}$. 
Since $-\dfrac{(1-c_{1}^{2})(1-c_{2}^{2})}{c_{1}c_{2}+1} < 0$, 
the inequality $\Phi_{1}(c_{1}, \dots , c_{4}) < 0$
gives the complementary region of $H$ 
containing the two asymptotic lines. 

Suppose that $c_{1}c_{2} < 0$. 
Then the hyperbola $H$ is contained 
in the upper left and lower right complementary regions of the two asymptotic lines. 
Now $\Phi_{1}(c_{1},c_{2},-1,1) = \Phi_{1}(c_{1},c_{2},1,-1) 
= -(1-c_{1}^{2})(1-c_{2}^{2}) < 0$. 
Therefore \\
$\Phi_{1}(c_{1}, \dots , c_{4}) < 0$ for any $-1 \leq c_{3},c_{4} \leq 1$.

Suppose that $c_{1}c_{2} > 0$. 
Then the hyperbola $H$ is contained 
in the upper right and lower left complementary regions of the two asymptotic lines. 
Let us consider $\Phi_{1}(c_{1},c_{2},1,1) = (c_{1}c_{2}-c_{1}-c_{2})^{2}-1$. 
Since $1- \sqrt{2} < c_{1},c_{2} < 0$, 
we have \\ $1 < (1-c_{1})(1-c_{2}) < 2$. 
Hence $0 < c_{1}c_{2}-c_{1}-c_{2} < 1$. 
Thus $\Phi_{1}(c_{1},c_{2},1,1) < 0$. 
Furthermore, 
$\Phi_{1}(c_{1},c_{2},c_{1},c_{2}) < 0$ by Corollary~\ref{cor:abab}. 
Therefore 
$\Phi_{1}(c_{1}, \dots , c_{4}) < 0$ 
for any $c_{1} \leq c_{3} \leq 1$ and $c_{2} \leq c_{4} \leq 1$. 
\end{proof}

\begin{rem}
\label{rem:andreev}
Andreev's theorem immediately implies that $[0,\pi/2]^{4} \subset \mathcal{A}$. 
For deformation in $\mathcal{C}$, 
cone loci with cone angles less than $\pi$ do not intersect 
unless the volumes converge to zero or a 2-dimensional Euclidean sub-cone-manifold appears, 
as shown by Kojima~\cite{kojima1998deformations}. 
\end{rem}

\begin{cor}
\label{cor:connected}
The space $\mathcal{A}$ 
is connected. 
Consequently, it holds that $\mathcal{C}_{0} = \mathcal{C}_{\mathrm{sym}}$, 
and $\frac{1}{2} \Theta \colon \mathcal{C}_{0} \to \mathcal{A}$ 
is a homeomorphism. 
\end{cor}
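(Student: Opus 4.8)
The plan is to prove that $\mathcal{A}$ is connected and then read off the remaining assertions formally. Since $\cos\colon[0,\pi)^4\to(-1,1]^4$ is a homeomorphism, it suffices to prove that $\mathcal{A}'=\cos(\mathcal{A})$ is path-connected. I would take as a connected ``core'' the diagonal surface $D=\{(a,b,a,b)\mid a,b\in(-1,1]\}$, which lies in $\mathcal{A}'$ by Corollary~\ref{cor:abab}, is homeomorphic to $(-1,1]^2$ and hence connected, and contains $(1,\dots,1)$ as well as meeting the cube $[0,1]^4\subset\mathcal{A}'$ furnished by Andreev's theorem (Remark~\ref{rem:andreev}). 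The whole statement then reduces to: every point of $\mathcal{A}'$ can be joined to $D$ by a path inside $\mathcal{A}'$.

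To build such a path from $P=(c_1,c_2,c_3,c_4)$ I would \emph{equalize opposite coordinates one at a time}: first sweep $c_3$ linearly to $c_1$, reaching $(c_1,c_2,c_1,c_4)$, then sweep $c_4$ to $c_2$, reaching $(c_1,c_2,c_1,c_2)\in D$. The advantage of sweeping a single coordinate is that by Theorem~\ref{thm:bound} membership in $\mathcal{A}'$ is governed by the four conditions ``$\Phi_i<0$ or $c_i+c_{i+1}>0$'', and one checks from the explicit formula that each $\Phi_i$ is \emph{affine or convex} as a function of any one variable: the relevant leading coefficient is nonnegative because on the corners $c_j\in\{-1,1\}$ it reduces to a square (for instance the coefficient of $c_1^2$ in $\Phi_1$ is $c_2^2c_3c_4-c_2(c_3+c_4)+1$, which equals $(c_2-1)^2$, $(c_2+1)^2$, or $1-c_2^2$ at the corners). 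Along the sweep the slabs $\{c_i+c_{i+1}\le 0\}$ are intervals in the sweep parameter, so on each such interval it is enough to verify $\Phi_i<0$ at its two endpoints. These endpoints are of three kinds: the starting configuration $P$, where $\Phi_i<0$ holds because $P\in\mathcal{A}'$; a ``sum-zero'' configuration, where the identity $\Phi_i(c,-c,c_{i+2},c_{i+3})=-(1-c^2)(c^2c_{i+2}c_{i+3}+1)<0$ applies; and the diagonal configuration, where $\Phi_i(a,b,a,b)=-(1-ab)(1-a^2)(1-b^2)<0$ as in Corollary~\ref{cor:abab}.

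The main obstacle is the one remaining endpoint type produced after the first sweep, namely the \emph{partial-diagonal} values $\Phi_i(c_1,c_2,c_1,c_4)$ on the slabs where the relevant pair-sum is nonpositive; controlling their sign, that is, showing the first sweep never leaves $\mathcal{A}'$, is the genuine computational heart of the argument, and is where the special shape of $\Phi_i$ (beyond the convexity and the two clean identities above) must be exploited. Once this is in place, the two sweeps give a path in $\mathcal{A}'$ from $P$ to $D$, and since $D$ is connected we conclude that $\mathcal{A}'$, hence $\mathcal{A}$, is connected.

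Finally I would deduce the stated consequences purely formally. The map $\tfrac12\Theta\colon\mathcal{C}_{\mathrm{sym}}\to\mathcal{A}$ is a continuous bijection: it is continuous with image $\mathcal{A}$ by the definition of $\mathcal{A}$, and injective by Theorem~\ref{thm:rigidity}. By Theorem~\ref{thm:local} it is moreover a local homeomorphism on the open set $\mathcal{C}_{\mathrm{sym}}$ (open by Proposition~\ref{prop:sym}), and a bijective local homeomorphism is a homeomorphism; thus $\mathcal{C}_{\mathrm{sym}}\cong\mathcal{A}$ is connected. By Proposition~\ref{prop:sym}, $\mathcal{C}_{\mathrm{sym}}$ is a union of connected components of $\mathcal{C}$ containing the component $\mathcal{C}_0$; being connected it consists of a single component, so $\mathcal{C}_0=\mathcal{C}_{\mathrm{sym}}$ and $\tfrac12\Theta\colon\mathcal{C}_0\to\mathcal{A}$ is a homeomorphism.
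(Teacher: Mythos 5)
Your overall architecture is the same as the paper's: connect an arbitrary point of $\mathcal{A}'$ to the diagonal $D=\{(a,b,a,b)\}$, which lies in $\mathcal{A}'$ by Corollary~\ref{cor:abab}, then move along $D$ to $(1,1,1,1)$; and your final formal deduction of $\mathcal{C}_0=\mathcal{C}_{\mathrm{sym}}$ and of the homeomorphism from Proposition~\ref{prop:sym}, Theorem~\ref{thm:rigidity} and Theorem~\ref{thm:local} is correct. But there is a genuine gap, and it is exactly the one you flag yourself: you never prove that the first sweep stays in $\mathcal{A}'$, i.e.\ that the relevant $\Phi_i$ are negative at the partial-diagonal point $(c_1,c_2,c_1,c_4)$. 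This is not a peripheral computation that can be deferred: it is essentially the whole content of the corollary, and your convexity-plus-endpoint scheme cannot supply it, because the partial-diagonal point is itself one of the endpoints that needs to be checked --- the argument is circular precisely there. (Your other ingredients do check out: the coefficient of $c_i^2$ in $\Phi_i$ is indeed nonnegative, so $\Phi_i$ is convex in each single variable, and the identity $\Phi_i(c,-c,c_{i+2},c_{i+3})=-(1-c^2)(c^2c_{i+2}c_{i+3}+1)$ is correct. But these alone do not close the loop.)

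What closes it in the paper is machinery you do not invoke. First, one may assume $c_1\le c_3$ and $c_2\le c_4$, since $\mathcal{A}'$ is invariant under the swaps $c_1\leftrightarrow c_3$ and $c_2\leftrightarrow c_4$ (they permute the four defining conditions); without this normalization your sweep may move $c_3$ \emph{upward}, where the monotonicity behaves differently. Second --- and this is the key reduction, coming from the proof of Theorem~\ref{thm:bound} --- for points $(c_1,c_2,x,y)$ with $x\ge c_1$ and $y\ge c_2$, membership in $\mathcal{A}'$ is governed by the \emph{single} condition ``$\Phi_1<0$ or $c_1+c_2>0$''; the conditions for $i=2,3,4$ are automatic there. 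This eliminates all the slabs and endpoint types you struggle with. Third, the proof of Theorem~\ref{thm:cube} gives the needed sign control: if $c_1c_2\le 0$ then $\Phi_1(c_1,c_2,x,y)<0$ on the whole square, while if $c_1c_2>0$ and $c_1+c_2\le 0$ one writes
\[
\Phi_1(c_1,c_2,x,y)=c_1c_2(c_1c_2+1)(x-s)(y-s)-\frac{(1-c_1^2)(1-c_2^2)}{c_1c_2+1},
\qquad s=\frac{c_1+c_2}{c_1c_2+1},
\]
and since $s<c_1$, $s<c_2$, both factors $x-s$, $y-s$ stay positive and decrease as $(x,y)$ moves monotonically from $(c_3,c_4)$ down to $(c_1,c_2)$, so $\Phi_1$ stays negative along the entire path (whether the straight segment, as in the paper, or your L-shaped sweep). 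With these three ingredients your plan goes through; without them, the ``computational heart'' you postpone is precisely the unproved theorem.
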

\begin{proof}
We show that $\mathcal{A}^{\prime}$ is path-connected. 
Take $(c_{1}, \dots , c_{4}) \in \mathcal{A}^{\prime}$. 
Without loss of generality, 
we may assume that $c_{1} \leq c_{3}$ and $c_{2} \leq c_{4}$. 
As in the proof of Theorem~\ref{thm:cube}, 
$c_{1}$ and $c_{2}$ are regarded to be fixed. 
Consider the slice 
\[
\mathcal{A}^{\prime}_{1}(c_{1},c_{2}) = 
\{(x,y) \in [c_{1},1] \times [c_{2},1] \mid 
(c_{1},c_{2},x,y) \in \mathcal{A}^{\prime} \}.
\] 
If $c_{1}c_{2} \leq 0$, 
then $\mathcal{A}^{\prime}_{1}(c_{1},c_{2}) = [c_{1},1] \times [c_{2},1]$. 
If $c_{1}c_{2} > 0$, 
then 
\[
\mathcal{A}^{\prime}_{1}(c_{1},c_{2}) = 
[c_{1},1] \times [c_{2},1] \cup \{(x,y) \in \mathbb{R}^{2} \mid 
\Phi_{1}(c_{1},c_{2},x,y) < 0 \}.
\] 
In both cases, there is a path joining 
$(c_{1}, \dots , c_{4})$ and $(c_{1},c_{2},c_{1},c_{2})$ 
by the proof of Theorem~\ref{thm:cube}. 
Moreover, 
there is a path joining 
$(c_{1},c_{2},c_{1},c_{2})$ and $(1, \dots , 1)$ 
by Corollary~\ref{cor:abab}. 
Thus we obtain a path joining $(c_{1}, \dots , c_{4})$ and $(1, \dots , 1)$. 
\end{proof}

\begin{cor}
\label{cor:global}
The global rigidity for $\mathcal{C}$ holds if and only if $\mathcal{C}_{0} = \mathcal{C}$. 
\end{cor}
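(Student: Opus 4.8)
The plan is to prove the two implications separately, leaning on the identifications from Corollary~\ref{cor:connected}, namely $\mathcal{C}_{0} = \mathcal{C}_{\mathrm{sym}}$ together with the fact that $\frac{1}{2}\Theta \colon \mathcal{C}_{0} \to \mathcal{A}$ is a homeomorphism. Throughout I read ``global rigidity for $\mathcal{C}$'' as the assertion that $\Theta \colon \mathcal{C} \to [0,2\pi)^{4}$ is injective, consistently with Theorem~\ref{thm:global}.

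The implication $\mathcal{C}_{0} = \mathcal{C} \Rightarrow$ global rigidity is immediate: if $\mathcal{C} = \mathcal{C}_{0}$, then $\Theta|_{\mathcal{C}} = \Theta|_{\mathcal{C}_{0}}$, and the latter is injective because $\frac{1}{2}\Theta \colon \mathcal{C}_{0} \to \mathcal{A}$ is a homeomorphism by Corollary~\ref{cor:connected}. So the whole content is in the converse.

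For the converse, I would assume global rigidity and take an arbitrary $g \in \mathcal{C}$, aiming to show $g \in \mathcal{C}_{0} = \mathcal{C}_{\mathrm{sym}}$. The key observation is that each of the symmetries $\gamma_{1}, \gamma_{2}$ of the pair $(T^{2} \times I, L)$ preserves every component $L_{i}$ \emph{as a set} (they only fix points or reverse orientations), so they do not permute the cone loci. Hence the pulled-back cone structure $\gamma_{j}^{*}g$, which again lies in $\mathcal{C}$ since automorphisms of the pair preserve the condition of having cone angles below $2\pi$, satisfies $\Theta(\gamma_{j}^{*}g) = \Theta(g)$ for $j=1,2$. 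By the assumed injectivity of $\Theta$ this forces $\gamma_{j}^{*}g = g$ in $\mathcal{C}$; that is, each $\gamma_{j}$ is isotopic to an isometry of the cone-manifold $(T^{2} \times I, L; g)$.

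The remaining step, and the one place I expect real care to be needed, is to upgrade ``each generator $\gamma_{1}, \gamma_{2}$ is realized isometrically up to isotopy'' to ``the group $\Gamma \cong \Z/2\Z \times \Z/2\Z$ acts isometrically up to isotopy'', which is exactly what the definition of symmetric demands. For this I would invoke the rigidity of finite-volume hyperbolic cone-manifolds: an isometry isotopic to the identity is the identity, so the isometry group injects into the relevant mapping class group and each $\gamma_{j}$ has a \emph{unique} isometric representative in its isotopy class. Because $\gamma_{1}$ and $\gamma_{2}$ satisfy the defining relations of $\Gamma$ up to isotopy, uniqueness forces their isometric representatives to satisfy those relations exactly, yielding a genuine isometric $\Gamma$-action isotopic to the original one. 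Thus $g \in \mathcal{C}_{\mathrm{sym}} = \mathcal{C}_{0}$, and since $g$ was arbitrary while $\mathcal{C}_{0} \subseteq \mathcal{C}$ always holds, we conclude $\mathcal{C}_{0} = \mathcal{C}$. Everything apart from this assembly-into-a-$\Gamma$-action step is routine bookkeeping that $\gamma_{j}$ fixes each $L_{i}$ setwise.
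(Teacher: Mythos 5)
Your proposal is correct and takes essentially the same approach as the paper: one direction follows from the injectivity of $\frac{1}{2}\Theta$ on $\mathcal{C}_{\mathrm{sym}} = \mathcal{C}_{0}$, and the other from pulling back a cone structure by the $\Gamma$-action to produce structures with equal cone angles. The only differences are organizational: the paper proves the second implication by contrapositive (a non-symmetric $g$ yields $g' \neq g$ with $\Theta(g) = \Theta(g')$), and it leaves implicit the step you spell out explicitly, namely that realizing each generator $\gamma_{1}, \gamma_{2}$ isometrically up to isotopy assembles, via uniqueness of isometric representatives, into an isometric action of all of $\Gamma$.
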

\begin{proof}
If $\mathcal{C}_{0} = \mathcal{C}_{\mathrm{sym}} = \mathcal{C}$, 
the global rigidity for $\mathcal{C}$ holds 
by Theorem~\ref{thm:rigidity}. 

If $\mathcal{C}_{0} = \mathcal{C}_{\mathrm{sym}} \neq \mathcal{C}$, 
there is a non-symmetric cone structure 
$g \in \mathcal{C} \setminus \mathcal{C}_{\mathrm{sym}}$. 
Then the $\Gamma$-action on $(T^{2} \times I, L)$ 
gives distinct cone structures $g, g^{\prime} \in \mathcal{C}$ 
such that \\ $\Theta (g) = \Theta (g^{\prime})$. 
Therefore the global rigidity for $\mathcal{C}$ fails. 
\end{proof}

Finally, we prove the main theorem. 

\begin{proof}[Proof of Theorem~\ref{thm:main}]
There is $(\alpha_{1}, \dots , \alpha_{4}) \in [0,\pi)^{4}$ 
which does not belong to $\mathcal{A}$. 
Take $\max_{i} \{\alpha_{i} \} < \alpha < \pi$. 
Corollary~\ref{cor:abab} implies that $(\alpha, \alpha, \alpha, \alpha) \in \mathcal{A}$. 
While we decrease the cone angles 
from $(\alpha, \dots , \alpha)$ to $(\alpha_{1}, \dots , \alpha_{4})$, 
the trapezohedron degenerates. 
This corresponds to a degeneration in $\mathcal{C}$ 
with one or two intersections of $L_{i}$.

More explicitly, we construct two paths in $\mathcal{A}^{\prime}$ 
whose terminals correspond to degenerations. 
Firstly, let $\bm{c}_{1}(x) = (1-\sqrt{2},1-\sqrt{2},1,x \in (-1,1]^{4}$ for $0 \leq x \leq 1$. 
Then 
\begin{align*}
\Phi_{1}(\bm{c}_{1}(x)) = \Phi_{2}(\bm{c}_{2}(x)) &= 2(1-\sqrt{2})^{2} (x-1), \\
\Phi_{3}(\bm{c}_{1}(x)) &= 2x(x+1), \\
\Phi_{4}(\bm{c}_{1}(x)) &= -2(1-\sqrt{2})(\sqrt{2}x+1)(x-1). 
\end{align*}
For $0 \leq x < 1$, 
we have $\Phi_{1}(\bm{c}_{1}(t)) < 0$, $\Phi_{2}(\bm{c}_{1}(x)) < 0$, $\Phi_{4}(\bm{c}_{1}(x)) < 0$, 
and $1+x > 0$. 
Hence $\bm{c}_{1}(x) \in \mathcal{A}^{\prime}$ for $0 \leq x < 1$ 
by Theorem~\ref{thm:bound}. 
Moreover, 
$\bm{c}_{1}(1) \in \partial_{1} \mathcal{A}^{\prime}$, 
and it does not belong to the other $\partial_{i} \mathcal{A}^{\prime}$. 
In the corresponding deformation of cone structures, 
the cone angle at $L_{4}$ decreases. 
The degeneration is due to an intersection of the cone loci $L_{1}$ and $L_{2}$.

Secondly, 
let $\bm{c}_{2}(x) = \left( \dfrac{1-\sqrt{5}}{2}, \dfrac{1-\sqrt{5}}{2}, \dfrac{1-\sqrt{5}}{2}, x \right)  \in (-1,1]^{4}$ 
for $0 \leq x \leq 1$. 
Then 
\begin{align*}
\Phi_{1}(\bm{c}_{2}(x)) = \Phi_{2}(\bm{c}_{2}(x)) &= \left( \dfrac{1-\sqrt{5}}{2} \right)^{4} (x-1), \\
\Phi_{3}(\bm{c}_{2}(x)) = \Phi_{4}(\bm{c}_{2}(x)) &= \left( \dfrac{1-\sqrt{5}}{2} \right)^{2} \left( x^{2}-x-\dfrac{1+\sqrt{5}}{2} \right).
\end{align*}
They are negative for $0 \leq x < 1$. 
Hence $\bm{c}_{2}(t) \in \mathcal{A}^{\prime}$ for $0 \leq x < 1$ 
by Theorem~\ref{thm:bound}. 
Moreover, 
$\bm{c}_{2}(1) \in \partial_{1} \mathcal{A}^{\prime} \cap \partial_{2} \mathcal{A}^{\prime}$ 
as shown in the proof of Corollary~\ref{cor:double}. 
In the corresponding deformation of cone structures, 
the cone angle at $L_{4}$ decreases. 
The degeneration is due to two intersections of cone loci: 
one of them is of $L_{1}$ and $L_{2}$, 
and the other is of $L_{2}$ and $L_{3}$. 
\end{proof}

\bibliographystyle{siam}
\bibliography{ref-dcda}

\end{document}